\newtheorem{theorem}{Theorem}[section]
\theoremstyle{definition}
\newtheorem{definition}[theorem]{Definition}
\newtheorem{example}[theorem]{Example}
\newtheorem{corollary}[theorem]{Corollary}
\newtheorem{lem}[theorem]{Lemma}
\theoremstyle{remark}
\numberwithin{equation}{section}
\begin{document}

\title{extensions of $n$-ary prime hyperideals via an $n$-ary multiplicative subset in a Krasner $(m,n)$-hyperring
  }

\author{M. Anbarloei}
\address{Department of Mathematics, Faculty of Sciences,
Imam Khomeini International University, Qazvin, Iran.
}

\email{m.anbarloei@sci.ikiu.ac.ir }


\subjclass[2010]{ 16Y99}


\keywords{  $n$-ary $J$-hyperideal, $n$-ary $\delta$-$J$-hyperideal, $(k,n)$-absorbing $\delta$-$J$-hyperideal.}

\begin{abstract}
Let $R$ be a Krasner $(m,n)$-hyperring and $S$ be an n-ary multiplicative subset of $R$. The purpose of this paper is to introduce the notion of n-ary $S$-prime hyperideals as a new expansion of n-ary prime hyperideals. A hyperideal $I$ of $R$ disjoint with $S$ is said to be an n-ary $S$-prime hyperideal if there exists  $s \in S$ such that whenever $g(x_1^n) \in I$ for all $x_1^n \in R$, then  $g(s,x_i,1^{(n-2)}) \in I$ for some $1 \leq i \leq n$. Several properties and characterizations concerning n-ary $S$-prime hyperideals are presented. The stability of this new concept with respect to various hyperring-theoretic constructions are studied. Furthermore, we extend this concept to n-ary $S$-primary hyperideals. We obtained some specific results explaining the structure. 

\end{abstract}
\maketitle
\section{Introduction}
Prime and primary ideals which are quite important in commutative rings have been studied by many authors. In 2019,  Hamed and Malek \cite{hamed} introduced the notion of $S$-prime ideal which is a generalization of prime ideals. Suppose that $R$ is a commutative ring with identity and  $S \subseteq R$ a multiplicative subset. A proper ideal $P$ of $R$ disjoint from $S$ is called  an $S$-prime of $R$ if there exists an $s \in S$ such that for all $x,y \in R$  if $xy \in P$, then $sx \in P$ or $sy \in P$. In \cite{massaoud} , Massaoud defined and investigated the  concept of $S$-primary ideals of a commutative ring in a way that generalizes essentially all the results concerning primary ideals. A proper ideal $Q$ of $R$ disjoint from $S$ is called  an $S$-primary of $R$ if there exists an $s \in S$ such that for all $x,y \in R$  if $xy \in P$, then $sx \in P$ or $sy \in \sqrt{Q}$. Furthermore, some results on S-primary ideals of a commutative ring were studied by Visweswaran in \cite{visweswaran}.

Hyperstructures represent a natural extension of classical algebraic structures and they were
defined by the French mathematician F. Marty.   In 1934,  Marty \cite{s1} defined  the concept of a hypergroup as a generalization of groups during the $8^{th}$ Congress of the Scandinavian Mathematicians. A comprehensive
review of the theory of hyperstructures can be found in  \cite {s2, s3, davvaz1, davvaz2, s4, jian}. The simplest algebraic hyperstructures which possess the properties of closure and associativity are called  semihypergroups. 
$n$-ary semigroups and $n$-ary groups are algebras with one $n$-ary operation which is associative and invertible in a generalized sense. The notion of investigations of $n$-ary algebras goes back to Kasner’s lecture \cite{s5} at a  scientific meeting in 1904.   In 1928, Dorente wrote the first paper concerning the theory of $n$-ary groups \cite{s6}. Later on, Crombez and
Timm \cite{s7, s8} defined and described  the notion of the $(m, n)$-rings and their quotient structures. Mirvakili and Davvaz [20] defined $(m,n)$-hyperrings and obtained several results in this respect. In \cite{s9}, they introduced and illustrated a generalization of the notion of a hypergroup in the sense of Marty and a generalization of an $n$-ary group,   which is called $n$-ary hypergroup.
 The $n$-ary structures  has been studied in  \cite{l1, l2, l3, ma, rev1}. Mirvakili and Davvaz \cite{cons} defined $(m,n)$-hyperrings and obtained several results in this respect.

It was Krasner, who introduced one important class of hyperrings, where the addition is a hyperoperation, while the multiplication is an ordinary binary operation, which is called Krasner hyperring.  In \cite{d1},  a generalization of the Krasner hyperrings, which is a subclass of $(m,n)$-hyperrings, was defined by Mirvakili and Davvaz. It is called Krasner $(m,n)$-hyperring. Ameri and Norouzi in \cite{sorc1} introduced some important
hyperideals such as Jacobson radical, n-ary prime and primary hyperideals, nilradical, and n-ary multiplicative subsets of Krasner $(m, n)$-hyperrings. Afterward, the notions of $(k,n)$-absorbing hyperideals and $(k,n)$-absorbing primary hyperideals were studied by Hila et. al. \cite{rev2}. 
Norouzi et. al.  proposed and analysed a new defnition for normal hyperideals in Krasner $(m,n)$-hyperrings, with respect to that one given in \cite{d1} and they showed that these hyperideals correspond to strongly regular relations \cite{nour}.  Asadi and Ameri  introduced and studied direct limit of a direct system in the category of Krasner $(m,n)$-hyperrigs \cite{asadi}. 
Dongsheng  defined the notion of $\delta$-primary ideals in  a commutative ring  where  $\delta$ is a function that assigns to each ideal $I$  an ideal $\delta(I)$ of the same ring \cite{bmb2}.  Moreover, in \cite{bmb3} he and his colleague  investigated 2-absorbing $\delta$-primary ideals  which  unify 2-absorbing ideals and 2-absorbing primary
ideals. Ozel Ay et al.  generalized the notion of $\delta$-primary  on Krasner hyperrings \cite{bmb4}. The concept of $\delta$-primary  hyperideals in Krasner $(m,n)$-hyperrings, which unifies the prime and primary hyperideals under one frame, was defined in \cite{mah3}. \\In this paper, we aim to  complete this circle of ideas.  
Motivated by the research works on $S$-prime ideals and $S$-primary ideals of  commutative rings,  we define and investigate  the notions of n-ary $S$-prime and n-ary $S$-primary hyperideals in a commutative Krasner $(m,n)$-hyperring. 
\section{Preliminaries}
In this section we recall some definitions and results concerning $n$-ary hyperstructures which we need to develop our paper.\\
Let $H$ be a nonempty set and $P^*(H)$ be the
set of all the non-empty subsets of $H$.  Then the mapping $f : H^n \longrightarrow P^*(H)$
 is called an $n$-ary hyperoperation and the algebraic system $(H, f)$ is called an $n$-ary hypergroupoid. For non-empty subsets $A_1,..., A_n$ of $H$ we define

$f(A^n_1) = f(A_1,..., A_n) = \bigcup \{f(x^n_1) \ \vert \  x_i \in  A_i, i = 1,..., n \}$.\\
The sequence $x_i, x_{i+1},..., x_j$ 
will be denoted by $x^j_i$. For $j< i$, $x^j_i$ is the empty symbol. Using this notation,

$f(x_1,..., x_i, y_{i+1},..., y_j, z_{j+1},..., z_n)$ \\
will be written as $f(x^i_1, y^j_{i+1}, z^n_{j+1})$. The  expression will be written in the form $f(x^i_1, y^{(j-i)}, z^n_{j+1})$, when $y_{i+1} =... = y_j = y$ . 
 
 If for every $1 \leq i < j \leq n$ and all $x_1, x_2,..., x_{2n-1} \in H$, 

$f(x^{i-1}_1, f(x_i^{n+i-1}), x^{2n-1}_{n+i}) = f(x^{j-1}_1, f(x_j^{n+j-1}), x_{n+j}^{2n-1}),$ \\
then the n-ary hyperoperation $f$ is called associative. An $n$-ary hypergroupoid with the
associative $n$-ary hyperoperation is called an $n$-ary semihypergroup. 

An $n$-ary hypergroupoid $(H, f)$ in which the equation $b \in f(a_1^{i-1}, x_i, a_{ i+1}^n)$ has a solution $x_i \in H$
for every $a_1^{i-1}, a_{ i+1}^n,b  \in H$ and $1 \leq i \leq n$, is called an $n$-ary quasihypergroup, when $(H, f)$ is an $n$-ary
semihypergroup, $(H, f)$ is called an $n$-ary hypergroup.  

An $n$-ary hypergroupoid $(H, f)$ is commutative if for all $ \sigma \in \mathbb{S}_n$, the group of all permutations of $\{1, 2, 3,..., n\}$, and for every $a_1^n \in H$ we have $f(a_1,..., a_n) = f(a_{\sigma(1)},..., a_{\sigma(n)})$.
  If  $a_1^n \in H$ then we denote $a_{\sigma(1)}^{\sigma(n)}$ as the $(a_{\sigma(1)},..., a_{\sigma(n)})$.

If $f$ is an $n$-ary hyperoperation and $t = l(n- 1) + 1$, then $t$-ary hyperoperation $f_{(l)}$ is given by

$f_{(l)}(x_1^{l(n-1)+1}) = f(f(..., f(f(x^n _1), x_{n+1}^{2n -1}),...), x_{(l-1)(n-1)+1}^{l(n-1)+1})$. 
\begin{definition}
(\cite{d1}). Let $(H, f)$ be an $n$-ary hypergroup and $B$ be a non-empty subset of $H$. $B$ is called
an $n$-ary subhypergroup of $(H, f)$, if $f(x^n _1) \subseteq B$ for $x^n_ 1 \in B$, and the equation $b \in f(b^{i-1}_1, x_i, b^n _{i+1})$ has a solution $x_i \in B$ for every $b^{i-1}_1, b^n _{i+1}, b \in B$ and $1 \leq i  \leq n$.
An element $e \in H$ is called a scalar neutral element if $x = f(e^{(i-1)}, x, e^{(n-i)})$, for every $1 \leq i \leq n$ and
for every $x \in H$. 

An element $0$ of an $n$-ary semihypergroup $(H, g)$ is called a zero element if for every $x^n_2 \in H$ we have
$g(0, x^n _2) = g(x_2, 0, x^n_ 3) = ... = g(x^n _2, 0) = 0$.
If $0$ and $0^ \prime $are two zero elements, then $0 = g(0^ \prime , 0^{(n-1)}) = 0 ^ \prime$  and so the zero element is unique. 
\end{definition}
\begin{definition}
\cite{l1} Let $(H, f)$ be a  $n$-ary hypergroup. $(H, f)$ is called a canonical $n$-ary
hypergroup if\\
(1) there exists a unique $e \in H$, such that for every $x \in H, f(x, e^{(n-1)}) = x$;\\
(2) for all $x \in H$ there exists a unique $x^{-1} \in H$, such that $e \in f(x, x^{-1}, e^{(n-2)})$;\\
(3) if $x \in f(x^n _1)$, then for all $i$, we have $x_i \in  f(x, x^{-1},..., x^{-1}_{ i-1}, x^{-1}_ {i+1},..., x^{-1}_ n)$.

We say that $e$ is the scalar identity of $(H, f)$ and $x^{-1}$ is the inverse of $x$. Notice that the inverse of $e$ is $e$.
\end{definition}
\begin{definition}
\cite{d1} A Krasner $(m, n)$-hyperring is an algebraic hyperstructure $(R, f, g)$, or simply $R$,  which
satisfies the following axioms:\\
(1) $(R, f$) is a canonical $m$-ary hypergroup;\\
(2) $(R, g)$ is a $n$-ary semigroup;\\
(3) the $n$-ary operation $g$ is distributive with respect to the $m$-ary hyperoperation $f$ , i.e., for every $a^{i-1}_1 , a^n_{ i+1}, x^m_ 1 \in R$, and $1 \leq i \leq n$,

$g(a^{i-1}_1, f(x^m _1 ), a^n _{i+1}) = f(g(a^{i-1}_1, x_1, a^n_{ i+1}),..., g(a^{i-1}_1, x_m, a^n_{ i+1}))$;\\
(4) $0$ is a zero element (absorbing element) of the $n$-ary operation $g$, i.e., for every $x^n_ 2 \in R$ we have 

$g(0, x^n _2) = g(x_2, 0, x^n _3) = ... = g(x^n_ 2, 0) = 0$.
\end{definition}
We assume throughout this paper that all Krasner $(m,n)$-hyperrings are commutative.

A non-empty subset $S$ of $R$ is called a subhyperring of $R$ if $(S, f, g)$ is a Krasner $(m, n)$-hyperring. Let
$I$ be a non-empty subset of $R$, we say that $I$ is a hyperideal of $(R, f, g)$ if $(I, f)$ is an $m$-ary subhypergroup
of $(R, f)$ and $g(x^{i-1}_1, I, x_{i+1}^n) \subseteq I$, for every $x^n _1 \in  R$ and  $1 \leq i \leq n$.

\begin{definition} \cite{sorc1} For every element $x$ in a Krasner $(m,n)$-hyperring $R$, the hyperideal generated by $x$ is denoted by $<x>$ and defined as follows:

$<x>=g(R,x,1^{(n-2)})=\{g(r,x,1^{(n-2)}) \ \vert \ r \in R\}$
\end{definition}

\begin{definition} \cite{sorc1}
We say that an element $x \in  R$ is invertible  if there exists $y \in R$ such that $1_R=g(x,y,1_R^{(n-2)})$. Also,
the subset $U$ of $R$ is invertible  if and only if every element of $U$ is invertible .
\end{definition}
\begin{definition}
 \cite{sorc1} A hyperideal $P$ of a Krasner $(m, n)$-hyperring $R$, such that $P \neq R$, is called a prime hyperideal if for hyperideals $U_1,..., U_n$ of $R$, $g(U_1^ n) \subseteq P$ implies that $U_1 \subseteq P$ or $U_2 \subseteq P$ or ...or $U_n \subseteq P$.
\end{definition}
\begin{lem} 
(Lemma 4.5 in \cite{sorc1})Let $P\neq R$ be a hyperideal of a Krasner $(m, n)$-hyperring $R$. Then $P$ is a prime hyperideal if for all $x^n_ 1 \in R$, $g(x^n_ 1) \in P$ implies that $x_1 \in P$ or ... or $x_n \in P$. 
\end{lem}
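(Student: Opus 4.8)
The plan is to prove exactly the implication as stated: assuming the element-wise condition (for all $x_1^n \in R$, $g(x_1^n) \in P$ forces $x_i \in P$ for some $i$), I will deduce that $P$ meets the hyperideal-theoretic definition of primeness in Definition 2.6. The natural route is a contradiction argument that converts a failure of the hyperideal-level conclusion into a failure of the element-level hypothesis, so essentially all the work is a translation between elements and hyperideals.

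Concretely, I would start from hyperideals $U_1, \ldots, U_n$ of $R$ with $g(U_1^n) \subseteq P$ and suppose toward a contradiction that $U_i \not\subseteq P$ for every $1 \leq i \leq n$. For each $i$ I would then pick a witness $a_i \in U_i \setminus P$. The structural fact I intend to exploit is that in a Krasner $(m,n)$-hyperring the $n$-ary multiplication $g$ is an ordinary single-valued operation (axiom (2): $(R,g)$ is an $n$-ary semigroup), so $g(a_1^n)$ is a genuine element of $R$ and not merely a subset. Combined with the convention $g(A_1^n) = \bigcup\{g(y_1^n) \mid y_i \in A_i\}$, this gives $g(a_1^n) \in g(U_1^n) \subseteq P$, i.e. $g(a_1^n) \in P$.

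At that point I would simply invoke the hypothesis with $x_i = a_i$: since $g(a_1^n) \in P$, the element-wise condition forces $a_i \in P$ for some $i$, contradicting the choice $a_i \in U_i \setminus P$. Hence the supposition is untenable and $U_i \subseteq P$ must hold for at least one $i$. Together with the standing assumption $P \neq R$, this is precisely the definition of a prime hyperideal, which finishes the argument.

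I expect no serious obstacle here; the only point that genuinely needs care is the element-to-hyperideal translation. One must use that $g$ is single-valued so that $g(a_1^n)$ is a single element to which the hypothesis applies, and that this element lies in $g(U_1^n)$ by the definition of $g$ on subsets. Notably, the deeper hyperring axioms (associativity, distributivity of $g$ over $f$, canonicity of $(R,f)$) are not required for this direction; they would only be needed for the reverse implication, where one passes from an element with $g(x_1^n)\in P$ to the principal hyperideals $\langle x_i\rangle$ — but that converse is not what is asserted in this lemma.
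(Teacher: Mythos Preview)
Your argument is correct and is the standard translation from the element-level hypothesis to the hyperideal-level conclusion of Definition~2.6; the only subtlety, that $g$ is an actual $n$-ary operation so $g(a_1^n)$ is a single element of $g(U_1^n)$, is handled properly. Note, however, that the paper does not supply its own proof of this lemma: it is quoted verbatim from \cite{sorc1} (Lemma~4.5 there) and stated without proof in the preliminaries, so there is no in-paper argument to compare against. Your proof is exactly what one would expect the cited source to contain.
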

\begin{definition} (\cite{sorc1}) Let $I$ be a hyperideal in a  Krasner $(m, n)$-hyperring $R$ with
scalar identity. The radical (or nilradical) of $I$, denoted by ${\sqrt I}^{(m,n)}$
is the hyperideal $\bigcap P$, where
the intersection is taken over all  prime hyperideals $P$ which contain $I$. If the set of all prime hyperideals containing $I$ is empty, then ${\sqrt I}^{(m,n)}$ is defined to be $R$.
\end{definition}
 Ameri and  Norouzi showed that if $x \in {\sqrt I}^{(m,n)}$ then 
 there exists $t \in \mathbb {N}$ such that $g(x^ {(t)} , 1_R^{(n-t)} ) \in I$ for $t \leq n$, or $g_{(l)} (x^ {(t)} ) \in I$ for $t = l(n-1) + 1$ \cite{sorc1}.
 
\begin{definition}
\cite{sorc1} A proper hyperideal $I$ in a  Krasner $(m, n)$-hyperring $R$ with the
scalar identity $1_R$ is said to be  primary if $g(x^n _1) \in I$ and $x_i \notin I$ implies that $g(x_1^{i-1}, 1_R, x_{ i+1}^n) \in {\sqrt I}^{(m,n)}$ for some $1 \leq i \leq n$.
\end{definition}
 If $I$ is a primary hyperideal in a  Krasner $(m, n)$-hyperring $R$ with the scalar identity $1_R$, then ${\sqrt I}^{(m,n)}$ is  prime. (Theorem 4.28 in \cite{sorc1})
 \begin{definition} \cite{sorc1}
 A non-empty subset $S$ of a Krasner $(m,n)$-hyperring $R$ is called an n-ary multiplicative, if  $g(s_1^n) \in S$ for $s_1,...,s_n \in S$.
 \end{definition}
\begin{definition} \cite{d1}
Let $(R_1, f_1, g_1)$ and $(R_2, f_2, g_2)$ be two Krasner $(m, n)$-hyperrings. A mapping
$h : R_1 \longrightarrow R_2$ is called a homomorphism if for all $x^m _1 \in R_1$ and $y^n_ 1 \in R_1$ we have

$h(f_1(x_1,..., x_m)) = f_2(h(x_1),...,h(x_m))$

$h(g_1(y_1,..., y_n)) = g_2(h(y_1),...,h(y_n)). $
\end{definition}

\section{$n$-ary S-prime hyperideals}
We start this section by introducing the concept of n-ary $S$-prime hyperideals of  Krasner $(m,n)$-hyperring $R$ where $S$ is an n-ary multiplicative subset of $R$. The following definition constitutes the $S$-version of n-ary prime hyperideals.
\begin{definition}
Let $S$ be  an n-ary multiplicative subset of a Krasner $(m,n)$-hyperring $R$ and $I$ be a hyperideal of $R$ with $I \cap S=\varnothing$. $I$ refers to an n-ary $S$-prime hyperideal if there exists an $s \in S$ such that for all $x_1^n \in R$ with $g(x_1^n) \in I$, we get $g(s,x_i,1^{(n-2)}) \in I$ for some $1 \leq i \leq n$.
\end{definition}
\begin{example} \label{ex}
The set $R=\{0,1,2\}$ with the following 3-ary hyperoeration $f$ and 3-ary operation $g$ is a Krasner $(3,3)$-hyperring such that $f$ and $g$ are commutative.
\[f(0,0,0)=0, \ \ \ f(0,0,1)=1, \ \ \ f(0,1,1)=1, \ \ \ f(1,1,1)=1, \ \ \ f(1,1,2)=R,\]
\[f(0,1,2)=R, \ \ \ f(0,0,2)=2,\ \ \ f(0,2,2)=2,\ \ \ f(1,2,2)=R, \ \ \ f(2,2,2)=2,\]
$\ \ \ g(1,1,1)=1,\ \ \ \ g(1,1,2)=g(1,2,2)=g(2,2,2)=2,$\\
and for $x_1,x_2 \in R, \ g(0,x_1,x_2)=0$.

Consider   3-ary multiplicative subset $S=\{1,2\}$ of Krasner $(3, 3)$-hyperring $(R, f , g)$. Then  hyperideal $P=\{0,2\}$ is a 3-ary $S$-prime hyperideal of $R$.
\end{example}
The following example shows that an n-ary $S$-prime hyperideal may not be an n-ary prime hyperideal of $R$. 
\begin{example} 
The set $R=\{0,1,2,3\}$ with following 2-hyperoperation $"\oplus"$ is a canonical 2-ary hypergroup.

\hspace{1.5cm}
\begin{tabular}{c|c} 
$\oplus$ & $0$ \ \ \ \ \ \ \ $1$ \ \ \ \ \ \ \ $2$ \ \ \ \ \ \ \ $3$
\\ \hline 0 & $0$\ \ \ \ \ \ \ $1$\ \ \ \ \ \ \ \ \ $2$ \ \ \ \ \ \ \ $3$ 
\\ $1$ & $1$ \ \ \ \ \ \ \ $A$ \ \ \ \ \ \ \ $3$ \ \ \ \ \ \ $B$
\\ $2$ & $2$ \ \ \ \ \ \ \ $3$ \ \ \ \ \ \ \ $0$ \ \ \ \ \ \ \ $1$
\\ $3$ & $3$ \ \ \ \ \ \ \ $B$ \ \ \ \ \ \ \ $1$ \ \ \ \ \ \ \ $A$
\end{tabular}

In which $A=\{0,1\}$ and $B=\{2,3\}$. Define a 4-ary operation $g$ on $R$ as follows:
If $a_1,a_2,a_3,a_4 \in B$ then $g(a_1^4)=2$ and otherwise $g(a_1^4)=0$.
 It follows that $(R,\oplus,g)$ is a Krasner (2,4)-hyperring. $S=\{2,3\}$ is a 4-ary multiplicative subset of $R$. In the hyperring, $I=\{0\}$ is a 4-ary $S$-prime hyperideal of $R$ but it is not  prime, because $g(1,2,2,3)=0 \in I$ while $1,2,3 \notin I$.
\end{example}
Our first theorem gives a characterization of n-ary $S$-prime hyperideals.
\begin{theorem} \label{31}
Let $S$ be  an n-ary multiplicative subset of a Krasner $(m,n)$-hyperring $R$ and $I$ be a hyperideal of $R$ with $I \cap S=\varnothing$. Then $I$ is  n-ary $S$-prime if and only if $(I:s)=\{r \in R \ \vert \ g(r,s,1^{(n-2)}) \in I\}$ is an n-ary prime hyperideal of $R$ for some $s \in S$.
\end{theorem}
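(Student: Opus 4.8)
The plan is to reduce everything to the elementwise characterization of $n$-ary primeness recorded in the Lemma of \cite{sorc1} quoted above, namely that a proper hyperideal $P$ is prime exactly when $g(x_1^n)\in P$ forces $x_i\in P$ for some $i$. Throughout I will use two routine facts: that $(I:s)$ is itself a hyperideal (membership $g(s,r,1^{(n-2)})\in I$ is preserved under $f$ and under multiplication by arbitrary elements, using that $I$ is a hyperideal together with the distributivity and associativity of $g$), and that $I\subseteq(I:s)$ because $I$ absorbs products. The easy direction is the ``if''. Assuming $(I:s)$ is $n$-ary prime for some $s\in S$, given $g(x_1^n)\in I$ the inclusion $I\subseteq(I:s)$ gives $g(x_1^n)\in(I:s)$, so by primeness some $x_i\in(I:s)$, i.e. $g(s,x_i,1^{(n-2)})\in I$. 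Since $I\cap S=\varnothing$, this exhibits $s$ as a witness and shows $I$ is $n$-ary $S$-prime.

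For the ``only if'', let $s\in S$ be an $S$-prime witness for $I$; I claim the same $s$ makes $(I:s)$ prime. First $(I:s)\neq R$, since $g(s,1,1^{(n-2)})=s\notin I$ shows $1\notin(I:s)$. Now suppose $g(x_1^n)\in(I:s)$, i.e. $g(g(x_1^n),s,1^{(n-2)})\in I$, and assume for contradiction that no $x_i$ lies in $(I:s)$. Pure associativity rewrites the left-hand element as $g(x_1,\dots,x_{n-1},g(x_n,s,1^{(n-2)}))$, which lies in $I$, so the $S$-prime property applied to this $n$-tuple yields an index $i$ with $g(s,c_i,1^{(n-2)})\in I$, where $c_i=x_i$ for $i\le n-1$ and $c_n=g(x_n,s,1^{(n-2)})$. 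If $i\le n-1$ this says $x_i\in(I:s)$, contradicting our assumption; hence $i=n$, and commutativity turns the conclusion into $g(g(s,s,1^{(n-2)}),x_n,1^{(n-2)})\in I$, that is $x_n\in(I:s')$ with $s'=g(s,s,1^{(n-2)})$.

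The crux, and the step I expect to be the main obstacle, is to collapse $(I:s')$ back to $(I:s)$, since a priori only $(I:s)\subseteq(I:s')$ is evident and one cannot simply ``cancel'' the extra copy of $s$. For this I would isolate a reduction lemma: for every $t\in S$ one has $(I:t)\subseteq(I:s)$. Given $r\in(I:t)$, apply the $S$-prime property to the $n$-tuple $(t,r,1,\dots,1)$, whose product $g(t,r,1^{(n-2)})$ lies in $I$; the output index cannot be the slot of $t$ nor any slot occupied by $1$, because $g(s,t,1^{(n-2)})$ and $g(s,1,1^{(n-2)})=s$ both lie in $S$ and hence cannot lie in the $S$-disjoint $I$, so the index is forced onto $r$, giving $g(s,r,1^{(n-2)})\in I$, i.e. $r\in(I:s)$. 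Applying this with $t=s'$ yields $x_n\in(I:s')\subseteq(I:s)$, the desired contradiction, so $(I:s)$ is prime with the same witness $s$. I should flag that this reduction lemma is exactly where the hypotheses on $S$ must be handled with care: its proof needs $g(s,t,1^{(n-2)})\in S$ and, for the application, $s'=g(s,s,1^{(n-2)})\in S$, which hold once the scalar identity $1_R$ belongs to $S$ so that the padded arguments remain inside the multiplicative subset. This is the only delicate point; the remaining manipulations are bookkeeping with the associativity and commutativity of $g$ and the absorption property of $I$.
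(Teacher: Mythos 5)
Your overall route coincides with the paper's own proof: both directions use the same witness $s$; your ``if'' direction is word-for-word the paper's (pass through $I\subseteq(I:s)$ and quote the elementwise primeness lemma); and for the ``only if'' direction the paper likewise absorbs $s$ into one slot of the tuple (the first slot, where you use the last), applies the $S$-prime property to the resulting $n$-tuple, and must then dispose of the branch that produces a second factor of $s$, namely $g(s^{(2)},y_1,1^{(n-3)})\in I$. The paper dismisses that branch with the single unexplained clause ``since $I\cap S=\varnothing$''; your reduction lemma $(I:t)\subseteq(I:s)$ for $t\in S$ is an honest expansion of exactly that step. Your check that $(I:s)\neq R$ is a detail the paper omits even though the quoted lemma on primeness is stated for proper hyperideals, so that is a small improvement.

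The one genuine soft spot is the one you flagged yourself: $g(s,t,1^{(n-2)})\in S$ and $s'=g(s,s,1^{(n-2)})\in S$ do not follow from the definition of an $n$-ary multiplicative subset unless $1\in S$, and the theorem does not assume $1\in S$, so as written your proof establishes a weaker statement. The assumption can be discharged, however: you never need the padded products to lie in $S$, only to lie outside $I$, and that follows from absorption. If $g(s,t,1^{(n-2)})\in I$, then since $I$ is a hyperideal, $g\bigl(g(s,t,1^{(n-2)}),s^{(n-2)},1\bigr)=g(s^{(n-1)},t)\in I$; but $g(s^{(n-1)},t)$ is a full $n$-ary product of elements of $S$, hence lies in $S$, contradicting $I\cap S=\varnothing$. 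This repairs the exclusion step in your reduction lemma. Likewise, rather than invoking the lemma for $t=s'$ (which is where $s'\in S$ was needed), pad directly: from $g(s',x_n,1^{(n-2)})=g(s^{(2)},x_n,1^{(n-3)})\in I$ deduce $g(s^{(n-1)},x_n)\in I$, apply the $S$-prime property to the tuple $(s^{(n-1)},x_n)$, and rule out the output $g(s,s,1^{(n-2)})\in I$ by the same padding (it would force $g(s^{(n)})\in I\cap S$); what remains is $g(s,x_n,1^{(n-2)})\in I$, i.e.\ $x_n\in(I:s)$, your desired contradiction. With this patch your argument proves the theorem exactly as stated and, incidentally, supplies the justification that the paper's own one-line appeal to $I\cap S=\varnothing$ leaves implicit.
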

\begin{proof}
$\Longrightarrow$ Let $I$ be is an n-ary $S$-prime hyperideal of $R$. Then there exists $s \in S$ such that for all $x_1^n \in R$ with $g(x_1^n) \in I$, we get $g(s,x_i,1^{(n-2)}) \in I$ for some $1 \leq i \leq n$. Suppose that  $g(y_1^n) \in (I:s)$ for $y_1^n \in R$. Then $g(s,g(y_1^n),1^{(n-2)})=g(g(s,y_1,1^{(n-2)}),y_2^n) \in I$ which means $g(s,g(s,y_1,1^{(n-2)}),1^{(n-2)})=g(s^2,y_1,1^{(n-3)}) \in I$ or $g(s,y_i,1^{(n-2)}) \in I$ for some $2 \leq i \leq n$. Since $I \cap S=\varnothing$, then we conclude that $g(s,y_1,1^{(n-2)}) \in I$ or $g(s,y_i,1^{(n-2)}) \in I$ for some $2 \leq i \leq n$. So, $g(s,y_i,1^{(n-2)}) \in I$ for some $1 \leq i \leq n$ which implies $y_i \in (I:s)$ for some $1 \leq i \leq n$. Consequently, $(I:s)$ is an n-ary prime hyperideal of $R$.\\
$\Longleftarrow$ Let $(I:s)$ be an n-ary prime hyperideal of $R$ for some $s \in S$. Suppose that $g(x_1^n)\in I$ for $x_1^n \in R$. Since $I \subseteq (I:s)$, then $g(x_1^n) \in (I:s)$. Since $(I:s)$ is an n-ary prime hyperideal of $R$, then $x_i \in (I:s)$ for some $1 \leq i \leq n$. This implies that $g(s,x_i,1^{(n-2)})\in I)$ for some $1 \leq i \leq n$ which means $I$ is  an n-ary $S$-prime hyperideal of $R$.
\end{proof}
\begin{theorem}
Let $S$ be  an n-ary multiplicative subset of a Krasner $(m,n)$-hyperring $R$ and  $R \subseteq G$ be  an extension of $R$. If $I$ is an n-ary $S$-prime hyperideal of $G$, then $I \cap R$ is an n-ary $S$-prime hyperideal of $R$.
\end{theorem}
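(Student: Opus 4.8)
The plan is to verify the three defining requirements of an $n$-ary $S$-prime hyperideal for $I \cap R$, reusing the very witness $s \in S$ that works for $I$ in $G$. First I would record that since $S \subseteq R \subseteq G$, the set $S$ is also an $n$-ary multiplicative subset of $G$, so the hypothesis that $I$ is $n$-ary $S$-prime in $G$ is meaningful and supplies an element $s \in S$ such that, for all $x_1^n \in G$, $g(x_1^n) \in I$ forces $g(s,x_i,1^{(n-2)}) \in I$ for some $1 \leq i \leq n$.

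The first (routine) step is to check that $I \cap R$ is a hyperideal of $R$. Since $(I,f)$ is an $m$-ary subhypergroup of $G$ and $R$ is a subhyperring, $(I \cap R, f)$ is an $m$-ary subhypergroup of $R$; and the absorption $g(x_1^{i-1}, I \cap R, x_{i+1}^n) \subseteq I \cap R$ for $x_1^n \in R$ holds because the relevant products lie in $R$ (closure of the subhyperring) and simultaneously in $I$ (absorption of $I$ inside $G$). Disjointness from $S$ is then immediate, as $(I \cap R) \cap S \subseteq I \cap S = \varnothing$.

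For the main step, let $s \in S$ be the witness for $I$ in $G$, and take $x_1^n \in R$ with $g(x_1^n) \in I \cap R$. Since $x_1^n \in R \subseteq G$ and $g(x_1^n) \in I$, the $S$-prime property of $I$ in $G$ yields some $1 \leq i \leq n$ with $g(s,x_i,1^{(n-2)}) \in I$. The crux is to observe that this element actually lies in $R$: because $s \in S \subseteq R$, $x_i \in R$, the scalar identity $1$ belongs to $R$, and $R$ is closed under $g$, we get $g(s,x_i,1^{(n-2)}) \in R$, hence $g(s,x_i,1^{(n-2)}) \in I \cap R$. This establishes the defining implication for $I \cap R$ with the same $s$, and the proof is complete.

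The argument is essentially formal, so there is no deep obstacle; the one point that genuinely uses the extension hypothesis is closure, namely that $g(s,x_i,1^{(n-2)})$ computed in $G$ returns to $R$. This is precisely where one must invoke that $R$ is a subhyperring sharing the scalar identity $1$ with $G$ (so that $1^{(n-2)}$ denotes the same element in both) and that $s$ was chosen inside $S \subseteq R$. Alternatively, the whole statement can be deduced from Theorem \ref{31}: one checks the identity $\bigl((I \cap R):s\bigr) = (I:s) \cap R$ and uses that the contraction to the subhyperring $R$ of an $n$-ary prime hyperideal $(I:s)$ of $G$ is again $n$-ary prime.
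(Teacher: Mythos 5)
Your proof is correct and follows essentially the same route as the paper's: reuse the witness $s\in S$ from the $S$-prime property of $I$ in $G$ and observe that $g(s,x_i,1^{(n-2)})$ lands back in $I\cap R$. In fact you are more careful than the paper, which leaves implicit both the closure point (that the product computed in $G$ lies in $R$) and the routine checks that $I\cap R$ is a hyperideal disjoint from $S$; your remarks simply fill these in without changing the argument.
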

\begin{proof}
 Let $I$ be  an n-ary $S$-prime hyperideal of $G$. Then there exist $s \in S$ such that for all $x_1^n \in R$ with $g(x_1^n) \in I$, we get $g(s,x_i,1^{(n-2)}) \in I$ for some $1 \leq i \leq n$. Let $g(x_1^n) \in I \cap R$ for $x_1^n \in R$. Since $g(x_1^n) \in I$, then $g(s,x_i,1^{(n-2)}) \in I$ for some $1 \leq i \leq n$ which means $g(s,x_i,1^{(n-2)}) \in I \cap R$. Thus,  $I \cap R$ is an n-ary $S$-prime hyperideal of $R$.
\end{proof}
\begin{theorem}  \label{32}
Let $S$ be  an n-ary multiplicative subset of a Krasner $(m,n)$-hyperring $R$ and $I$ be a hyperideal of $R$. If $I \subseteq \cup_{i=1}^n I_i$ for some n-ary $S$-prime hyperideals $I_1^n$ of $R$, then there exists $s \in S$ such that $g(s,I,1^{(n-2)}) \subseteq P_i$ for some $1 \leq i \leq n$.
\end{theorem}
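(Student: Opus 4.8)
The plan is to reduce the statement to a prime-avoidance principle by replacing each n-ary $S$-prime hyperideal $I_i$ with an associated n-ary prime hyperideal supplied by Theorem \ref{31}. Since each $I_i$ is n-ary $S$-prime, Theorem \ref{31} yields elements $s_1,\ldots,s_n \in S$ with $(I_i:s_i)$ an n-ary prime hyperideal for each $1 \leq i \leq n$. Because $I_i$ is a hyperideal, $r \in I_i$ forces $g(r,s_i,1^{(n-2)}) \in I_i$, so $I_i \subseteq (I_i:s_i)$. Combined with the hypothesis this gives
\[
I \subseteq \bigcup_{i=1}^n I_i \subseteq \bigcup_{i=1}^n (I_i:s_i),
\]
that is, $I$ is covered by $n$ n-ary prime hyperideals. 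If I can show $I \subseteq (I_j:s_j)$ for some $j$, then for every $x \in I$ commutativity gives $g(s_j,x,1^{(n-2)}) = g(x,s_j,1^{(n-2)}) \in I_j$, whence $g(s_j,I,1^{(n-2)}) \subseteq I_j$, and taking $s = s_j$ finishes the proof (so the index $i$ in the conclusion is this $j$).

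Thus the whole theorem rests on a prime-avoidance lemma for Krasner $(m,n)$-hyperrings: a hyperideal contained in a finite union of n-ary prime hyperideals must be contained in one of them. I would prove this by passing to a minimal subcover, so that I may assume the cover $I \subseteq \bigcup_i (I_i:s_i)$ is irredundant; then for each $k$ I can choose $a_k \in I$ lying in $(I_k:s_k)$ but outside every $(I_i:s_i)$ with $i \neq k$. I would next form, for each $k$, the n-ary product $b_k = g(a_1,\ldots,a_{k-1},a_{k+1},\ldots,a_n,1)$ (omitting $a_k$ and filling the last slot with the scalar identity), and combine the $b_1,\ldots,b_n$ through the $m$-ary hyperaddition $f$, padding with the zero element $0$ so that the arity matches, to produce a hypersum $Z \subseteq I$. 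Using the elementwise characterization of n-ary prime hyperideals, the product $b_k$ avoids $(I_k:s_k)$ (none of its factors, nor the identity, lies in that proper prime hyperideal), whereas each $b_i$ with $i \neq k$ contains the factor $a_k \in (I_k:s_k)$ and hence lies in $(I_k:s_k)$.

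The hard part will be controlling the set-valued nature of $f$: the witness is not a single element but a hypersum $Z$, and I must argue that $Z$ cannot be swallowed entirely by $\bigcup_i (I_i:s_i)$. I would handle this by working modulo a fixed prime $(I_k:s_k)$, where the images of all the $b_i$ with $i \neq k$ vanish while $b_k$ survives; since $0$ is neutral for the induced hyperaddition, the image of $Z$ is then the single nonzero class of $b_k$, so every element of $Z$ lies outside $(I_k:s_k)$ (equivalently, one may invoke reversibility in the canonical $m$-ary hypergroup to pull a putative member of $Z\cap(I_k:s_k)$ back to $b_k\in(I_k:s_k)$, a contradiction). As $k$ was arbitrary, no element of $Z$ lies in $\bigcup_i (I_i:s_i)$, contradicting $Z \subseteq I \subseteq \bigcup_i (I_i:s_i)$. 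Hence an irredundant cover by two or more primes is impossible, the minimal subcover consists of a single prime, and we obtain $I \subseteq (I_j:s_j)$ for some $j$, completing the argument as above.
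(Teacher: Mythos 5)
Your proposal is correct, and its first half is exactly the paper's argument: invoke Theorem \ref{31} to get $s_i \in S$ with each $(I_i:s_i)$ an n-ary prime hyperideal, observe $I_i \subseteq (I_i:s_i)$, so $I \subseteq \cup_{i=1}^n (I_i:s_i)$, deduce $I \subseteq (I_j:s_j)$ for some $j$, and unwind to $g(s_j,I,1^{(n-2)}) \subseteq I_j$ (the $P_i$ in the statement is a typo for $I_i$). Where you diverge is at the avoidance step: the paper simply cites Theorem 5.1 of \cite{mah4} for the fact that a hyperideal covered by finitely many n-ary prime hyperideals lies in one of them, whereas you prove this from scratch. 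Your proof is the classical ``sum of products, each omitting one generator'' avoidance argument transplanted to the hyperstructure setting, and you correctly identify and resolve the one genuinely new difficulty there: the witness is a hypersum $Z$ rather than an element, and the clean way to show $Z \cap P_k = \varnothing$ is exactly your reversibility argument (axiom (3) of canonical $m$-ary hypergroups: $z \in Z \cap P_k$ would put $b_k \in f(z, b_1^{-1},\ldots,\widehat{b_k^{-1}},\ldots, 0^{(\cdot)}) \subseteq P_k$, contradicting $b_k \notin P_k$). Three small repairs you should make explicit, none affecting correctness: after passing to an irredundant subcover you may have $r < n$ primes, so $b_k$ must be padded as $g(a_1,\ldots,\widehat{a_k},\ldots,a_r,1^{(n-r+1)})$; if $n$ exceeds $m$, the hypersum of $b_1,\ldots,b_r$ requires the iterated hyperoperation $f_{(l)}$ rather than a single $f$ padded with zeros; and your step $b_k \notin P_k$ uses $1 \notin (I_k:s_k)$, which needs the observation that $(I_k:s_k)$ is proper because $1 \in (I_k:s_k)$ would force $s_k \in I_k$, contradicting $I_k \cap S = \varnothing$. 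The trade-off is the usual one: the paper's proof is two lines but outsources the key lemma to another paper, while yours is self-contained and actually verifies that prime avoidance survives the passage to Krasner $(m,n)$-hyperrings.
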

\begin{proof}
Let $I \subseteq \cup_{i=1}^n I_i$ for some n-ary $S$-prime hyperideals $I_1^n$ of $R$. For each $1 \leq i \leq n$, we get $s_i \in S$ such that $(I_i : s_i)$ is an n-ary prime hyperideal of $R$, by Theorem \ref{31}. Since $I \subseteq \cup_{i=1}^n I_i\subseteq \cup_{i=1}^n(I_i: s_i)$, we have $I \subseteq (I_i : s_i)$ for some $1 \leq i \leq n$, by Theorem 5.1 in \cite{mah4}. Thus $g(s_i,I,1^{(n-2)}) \subseteq I_i$.
\end{proof}
\begin{theorem}  \label{33}
Let $S$ be  an n-ary multiplicative subset of a Krasner $(m,n)$-hyperring $R$ and $I$ be a hyperideal of $R$ with $I \cap S=\varnothing$. Then $I$ is  n-ary $S$-prime if and only if there exists $s \in S$, for all hyperideals $I_1^n$ of $R$, if $g(I_1^n) \subseteq I$, then $g(s,I_i,1^{(n-1)}) \subseteq I$ for some $1 \leq i \leq n$.
\end{theorem}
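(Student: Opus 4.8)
The plan is to prove the two implications separately, using the characterization of Theorem \ref{31} for the forward direction and a reduction to principal hyperideals for the converse. Throughout I read the claimed conclusion as $g(s,I_i,1^{(n-2)})\subseteq I$, since this is the arity-consistent form matching Theorem \ref{31}.

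For the forward direction, suppose $I$ is n-ary $S$-prime. By Theorem \ref{31} there is some $s\in S$ for which $(I:s)=\{r\in R \mid g(r,s,1^{(n-2)})\in I\}$ is an n-ary prime hyperideal. I would first record the easy inclusion $I\subseteq(I:s)$: if $r\in I$ then $g(r,s,1^{(n-2)})\in I$ because $I$ is a hyperideal and hence absorbs the $n$-ary product. Now given hyperideals $I_1^n$ of $R$ with $g(I_1^n)\subseteq I$, we get $g(I_1^n)\subseteq(I:s)$, and since $(I:s)$ is prime, Definition of prime hyperideal (the ideal form) yields $I_i\subseteq(I:s)$ for some $1\leq i\leq n$. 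Unwinding the definition of $(I:s)$ and using commutativity of $g$ then gives $g(s,I_i,1^{(n-2)})\subseteq I$, as required.

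For the converse, assume the ideal condition holds with witness $s\in S$, and take $x_1^n\in R$ with $g(x_1^n)\in I$; the goal is $g(s,x_i,1^{(n-2)})\in I$ for some $i$. The idea is to feed the principal hyperideals $<x_j>=g(R,x_j,1^{(n-2)})$ into the hypothesis. The crux is the inclusion $g(<x_1>,\dots,<x_n>)\subseteq <g(x_1^n)>\subseteq I$. A typical generator is $g(a_1^n)$ with $a_j=g(r_j,x_j,1^{(n-2)})$, and using associativity and commutativity of $g$ together with $1_R$ being the scalar identity, it rewrites as $g(g(r_1^n),g(x_1^n),1^{(n-2)})$, which belongs to $<g(x_1^n)>$ by taking $r=g(r_1^n)$; since $g(x_1^n)\in I$ this whole set lies in $I$. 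This regrouping, collapsing the $n^2$ factors into the $r$-block $g(r_1^n)$, the $x$-block $g(x_1^n)$, and the redundant identities, is the main technical obstacle and is where associativity and commutativity do the real work.

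Finally, applying the hypothesis to the hyperideals $<x_1>,\dots,<x_n>$ (whose $g$-product we have just shown to lie in $I$) produces an index $i$ with $g(s,<x_i>,1^{(n-2)})\subseteq I$. Since $x_i=g(1_R,x_i,1^{(n-2)})\in <x_i>$, I conclude $g(s,x_i,1^{(n-2)})\in I$, so $I$ is n-ary $S$-prime with the same witness $s$. I expect the only delicate point to be a clean justification of the rewriting step; everything else is bookkeeping with the definitions of hyperideal, of $(I:s)$, and of the principal hyperideal $<x>$.
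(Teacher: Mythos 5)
Your proof is correct, and you were right to read the statement's $1^{(n-1)}$ as a typo for $1^{(n-2)}$: the paper's own proof uses $1^{(n-2)}$ throughout. Your converse direction is essentially the paper's argument: it too passes to the principal hyperideals and applies the hypothesis to $g(\langle x_1 \rangle,\dots,\langle x_n \rangle)\subseteq I$, but the paper merely asserts this inclusion, whereas you justify it via the regrouping $g(g(r_1,x_1,1^{(n-2)}),\dots,g(r_n,x_n,1^{(n-2)}))=g(g(r_1^n),g(x_1^n),1^{(n-2)})\in \langle g(x_1^n)\rangle$, which is exactly the associativity/commutativity/scalar-identity bookkeeping the paper performs silently elsewhere (e.g.\ in the proof of Theorem \ref{31}); filling it in is a genuine improvement, not a deviation. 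Your forward direction, however, takes a genuinely different route. The paper argues directly by contradiction from the element-wise definition of $S$-prime: assuming $g(s,I_i,1^{(n-2)})\nsubseteq I$ for every $i$, it picks $a_i\in I_i$ with $g(s,a_i,1^{(n-2)})\notin I$, and then $g(a_1^n)\in g(I_1^n)\subseteq I$ contradicts $S$-primeness with the fixed witness $s$. You instead invoke Theorem \ref{31} to obtain $s\in S$ with $(I:s)$ an n-ary prime hyperideal, note $I\subseteq (I:s)$ by the absorption property of hyperideals, and apply the paper's ideal-wise definition of prime hyperideal to $g(I_1^n)\subseteq I\subseteq (I:s)$ to get $I_i\subseteq (I:s)$, i.e.\ $g(s,I_i,1^{(n-2)})\subseteq I$. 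This is valid and non-circular, since prime hyperideals are \emph{defined} by the ideal-wise condition (so you are not covertly using Corollary \ref{34}, which is itself deduced from this theorem). The trade-off: the paper's contradiction argument is more elementary, consuming nothing beyond the definition, while your route makes the structural content transparent — the theorem is just primeness of the colon hyperideal $(I:s)$ transported through the inclusion $I\subseteq(I:s)$ — at the cost of depending on Theorem \ref{31}.
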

\begin{proof}
$\Longrightarrow$ Let $I$  is  an n-ary $S$-prime hyperideal of $R$. Then there exists $s \in S$ such that for all $x_1^n \in R$ with $g(x_1^n) \in I$ we have $g(s,x_i,1^{(n-2)}) \in I$ for some $1 \leq i \leq n$. Let for some hyperideals $I_1^n$ of $R$ with $g(I_1^n) \in I$ we have $g(s,I_i,1^{(n-2)}) \nsubseteq I$ for all $1 \leq i \leq n$. This means $g(s,a_i,1^{(n-2)}) \notin I$ for some $a_i \in I_i$ and $1 \leq i \leq n$ which is a contradiction, since $I$ is an n-ary $S$-prime hyperideal of $R$ and $g(a_1^n) \in g(I_1^n) \subseteq I$. \\
$\Longleftarrow$   Let $g(x_1^n) \in I$ for $x_1^n \in R$. Then $ g(\langle x_1 \rangle, \cdots, \langle x_n \rangle) \subseteq I$. Hence we have $g(s,\langle x_i \rangle, 1^{(n-2)}) \subseteq I$ for some $1 \leq i \leq n$ which implies $g(s,x_i,1^{(n-2)}) \in I$. Thus, $I$ is  an n-ary $S$-prime hyperideal of $R$.
\end{proof} 
In view of Theorem \ref{33}, we have the following result.
\begin{corollary}  \label{34}
Let $I$ be a proper hyperideal of a Krasner $(m,n)$-hyperring $R$. Then $I$ is an n-ary prime hyperideal if and only if for all hyperideals $I_1^n$ of $R$, If $g(I_1^n) \subseteq I$, then $I_i \subseteq I$ for some $1 \leq i \leq n$.
\end{corollary}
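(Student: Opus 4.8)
The plan is to derive the corollary from Theorem \ref{33} by specializing the n-ary multiplicative subset to the trivial one $S=\{1_R\}$. First I would verify that $S=\{1_R\}$ is indeed an n-ary multiplicative subset: since $g(1_R^{(n)})=1_R\in S$, the closure condition of the definition holds. Moreover, for a proper hyperideal $I$ we have $1_R\notin I$ (otherwise $g(r,1_R,1_R^{(n-2)})=r\in I$ for every $r\in R$, forcing $I=R$), so the disjointness hypothesis $I\cap S=\varnothing$ required by Theorem \ref{33} is equivalent to $I$ being proper. This reduces the corollary to a statement about n-ary $S$-prime hyperideals for this particular $S$.

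The second step is to identify the two notions of primeness in this case. Taking the witness $s=1_R$, the defining condition of an n-ary $S$-prime hyperideal reads: $g(x_1^n)\in I$ implies $g(1_R,x_i,1_R^{(n-2)})\in I$ for some $1\leq i\leq n$. Using the scalar identity property $g(1_R,x_i,1_R^{(n-2)})=x_i$, this collapses exactly to the element-wise characterization of an n-ary prime hyperideal recorded in Lemma~2.7 (Lemma~4.5 of \cite{sorc1}). Hence, for a proper $I$, being n-ary $S$-prime with $S=\{1_R\}$ coincides with being n-ary prime.

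Finally, I would feed this identification into Theorem \ref{33}. That theorem states that $I$ is n-ary $S$-prime if and only if there exists $s\in S$ such that $g(I_1^n)\subseteq I$ forces $g(s,I_i,1^{(n-2)})\subseteq I$ for some $1\leq i\leq n$. With $S=\{1_R\}$ the only available witness is $s=1_R$, and again $g(1_R,I_i,1_R^{(n-2)})=I_i$, so the conclusion becomes precisely ``$g(I_1^n)\subseteq I$ implies $I_i\subseteq I$ for some $i$.'' Combining the first two steps then yields the stated equivalence. I do not anticipate a genuine obstacle here; the only point requiring care is the bookkeeping of the scalar identity, namely checking $g(1_R,z,1_R^{(n-2)})=z$ at both the element level and the hyperideal level, which is immediate from the canonical hypergroup and scalar-identity axioms of a Krasner $(m,n)$-hyperring.
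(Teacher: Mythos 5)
Your proposal is correct and follows exactly the paper's route: the paper's proof is the one-line remark ``Consider $S=\{1\}$; then we are done by Theorem \ref{33},'' and you have simply supplied the routine verifications (that $\{1_R\}$ is an $n$-ary multiplicative subset, that $I\cap S=\varnothing$ amounts to $I$ being proper, and that the scalar identity collapses $g(1_R,z,1_R^{(n-2)})$ to $z$ at both the element and hyperideal level) that the paper leaves implicit.
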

\begin{proof}
Consider $S=\{1\}$. Then we are done, by Theorem \ref{33}.
\end{proof}
\begin{theorem} \label{35}
Let $S$ be  an n-ary multiplicative subset of a Krasner $(m,n)$-hyperring $R$ and $I$ be an n-ary $S$-prime hyperideal of $R$. If $J$ be a hyperideal of $R$ with $J \subseteq I$, then $g(s, \sqrt{J}^{(m,n)},1^{(n-2)}) \subseteq I$ for some $s \in S$.
\end{theorem}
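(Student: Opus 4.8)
The plan is to reduce the statement to Theorem \ref{31} together with the definition of the radical as an intersection of prime hyperideals. First I would invoke Theorem \ref{31}: since $I$ is $n$-ary $S$-prime, there exists $s \in S$ such that $(I:s)=\{r \in R \mid g(r,s,1^{(n-2)}) \in I\}$ is an $n$-ary prime hyperideal of $R$. This is precisely the $s$ that will appear in the conclusion, so no further choice of $s$ is needed.

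Next I would establish the chain $J \subseteq I \subseteq (I:s)$. The first inclusion is the hypothesis $J \subseteq I$. For the second, if $r \in I$ then $g(r,s,1^{(n-2)}) \in I$, because $I$ is a hyperideal and hence absorbs the $n$-ary multiplication $g$; thus $r \in (I:s)$, giving $I \subseteq (I:s)$. Combining these, $(I:s)$ is an $n$-ary prime hyperideal that contains $J$. In particular, the family of prime hyperideals containing $J$ is nonempty, so the degenerate clause in Definition \ref{} of the radical (where $\sqrt{J}^{(m,n)}$ is set to $R$ when no prime contains $J$) does not apply.

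The key observation is then immediate: by the definition of $\sqrt{J}^{(m,n)}$ as $\bigcap P$ over all prime hyperideals $P$ with $J \subseteq P$, and since $(I:s)$ is one such $P$, we obtain $\sqrt{J}^{(m,n)} \subseteq (I:s)$. Unwinding the definition of $(I:s)$, this says that $g(s,x,1^{(n-2)}) \in I$ for every $x \in \sqrt{J}^{(m,n)}$, which is exactly the assertion $g(s,\sqrt{J}^{(m,n)},1^{(n-2)}) \subseteq I$.

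I do not anticipate a genuine obstacle here; the result drops out once the inclusion $I \subseteq (I:s)$ and the intersection characterization of the radical are in place. The only point that calls for a little care is confirming that $(I:s)$ really belongs to the family over which $\sqrt{J}^{(m,n)}$ is intersected, namely that it is simultaneously prime (from Theorem \ref{31}) and a superset of $J$ (from the chain above), so that the argument is not undermined by the degenerate case $\sqrt{J}^{(m,n)}=R$. Everything else is a direct translation between membership in $(I:s)$ and the defining condition $g(s,\cdot,1^{(n-2)}) \in I$.
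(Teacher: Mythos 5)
Your proposal is correct, and it takes a genuinely different route from the paper. The paper argues elementwise: for $a \in \sqrt{J}^{(m,n)}$ it invokes the power characterization of the radical (there is $t \in \mathbb{N}$ with $g(a^{(t)},1^{(n-t)}) \in J$ for $t \leq n$, or $g_{(l)}(a^{(t)}) \in J$ for $t = l(n-1)+1$), passes to the principal hyperideal $\langle a \rangle$, and applies the hyperideal-level characterization of $S$-prime hyperideals (Theorem \ref{33}) to extract $g(s,a,1^{(n-2)}) \in I$, with a case split between the two forms of the power condition (the second case is left as ``a similar argument''). You instead apply Theorem \ref{31} once to fix $s \in S$ with $(I:s)$ prime, check $J \subseteq I \subseteq (I:s)$ (the inclusion $I \subseteq (I:s)$ via the absorption property of the hyperideal $I$ is exactly right), and then use the paper's \emph{definition} of $\sqrt{J}^{(m,n)}$ as the intersection of all prime hyperideals containing $J$ to conclude $\sqrt{J}^{(m,n)} \subseteq (I:s)$, which unwinds to the stated containment. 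Your route buys several things: it eliminates the $t \leq n$ versus $t = l(n-1)+1$ case analysis entirely, it makes the uniformity of $s$ transparent (one fixed $s$ from Theorem \ref{31}, whereas in the paper's proof one must notice that the $s$ supplied by Theorem \ref{33} does not depend on $a$), and it actually proves the stronger statement $\sqrt{J}^{(m,n)} \subseteq (I:s)$. Your attention to the degenerate clause of the radical definition (ruled out because $(I:s)$ is itself a prime containing $J$) is precisely the one point where the intersection-based argument could have silently failed, and you handled it. The paper's element-level argument has the mild virtue of surviving in settings where the radical is given only by the power characterization rather than as an intersection of primes, but given the paper's actual definitions, your proof is the more direct one.
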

\begin{proof}
Let $a \in \sqrt{J}^{(m,n)}$. Then there exists $t \in \mathbb{N}$ such that $g(a^{(t)},1^{(n-t)}) \in J$ for $t \leq n$ or $g_{(l)}(a^{(t)}) \in I$ for $t=l(n-1)+1$. If $g(a^{(t)},1^{(n-t)}) \in J \subseteq I$, then $g(\langle a \rangle ^{(t)},1^{(n-t)}) \subseteq I$. By Theorem \ref{33}, we get $g(s,\langle a \rangle,1^{(n-2)}) \subseteq I$ for some $s \in S$ which implies $g(s,a,1^{(n-2)}) \in I$. Consequently, $g(s,\sqrt{J}^{(m,n)}, 1^{(n-2)}) \subseteq I$.  If $t = l(n-1)+1$, then by using a similar argument, one can easily complete the proof.
\end{proof}
\begin{theorem}  \label{36}
Let $S$ be  an n-ary multiplicative subset of a Krasner $(m,n)$-hyperring $R$ and $I_1^n$ be  n-ary $S$-prime hyperideals of $R$. Then there exists $s \in S$ such that $g(s,\sqrt{\cap_{i=1}^n I_i},1^{(n-2)}) \subseteq \cap_{i=1}^n I_i$.
\end{theorem}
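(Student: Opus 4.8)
The plan is to reduce everything to Theorem \ref{35} by taking the ideal $J$ there to be the intersection itself. Since each $I_i$ is a hyperideal, $J=\cap_{i=1}^n I_i$ is again a hyperideal of $R$, and clearly $J\subseteq I_i$ for every $1\leq i\leq n$. As each $I_i$ is $n$-ary $S$-prime, I would apply Theorem \ref{35} to the pair $J\subseteq I_i$, obtaining an element $s_i\in S$ with $g(s_i,\sqrt{J}^{(m,n)},1^{(n-2)})\subseteq I_i$; equivalently, $g(s_i,a,1^{(n-2)})\in I_i$ for every $a\in\sqrt{J}^{(m,n)}$.

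The next step is to merge the finitely many witnesses $s_1,\dots,s_n$ into a single one. Set $s=g(s_1^n)$, which lies in $S$ precisely because $S$ is an $n$-ary multiplicative subset. The goal is then to verify that this single $s$ works simultaneously for all of the $I_i$, i.e. that $g(s,a,1^{(n-2)})\in\cap_{i=1}^n I_i$ for each $a\in\sqrt{J}^{(m,n)}$.

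To this end, fix $a\in\sqrt{J}^{(m,n)}$ and $1\leq i\leq n$. The crucial computation is to rewrite $g(g(s_1^n),a,1^{(n-2)})$ so that the factor $g(s_i,a,1^{(n-2)})$ is exposed. Using commutativity to bring $s_i$ alongside $a$ and the generalized associativity of $g$ (padding with the scalar identity $1$ where needed), one gets
\[
g(g(s_1^n),a,1^{(n-2)})=g\big(g(s_i,a,1^{(n-2)}),\,s_1,\dots,\widehat{s_i},\dots,s_n\big),
\]
where $\widehat{s_i}$ denotes omission of $s_i$, so that the outer $g$ again has exactly $n$ arguments. Since $g(s_i,a,1^{(n-2)})\in I_i$ by the first paragraph and $I_i$ is a hyperideal, the right-hand side lies in $I_i$. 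As $i$ was arbitrary, $g(s,a,1^{(n-2)})\in\cap_{i=1}^n I_i$, and letting $a$ range over $\sqrt{J}^{(m,n)}$ yields $g(s,\sqrt{\cap_{i=1}^n I_i}^{(m,n)},1^{(n-2)})\subseteq\cap_{i=1}^n I_i$, as required.

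I expect the only delicate point to be the displayed rearrangement. It rests on the observation that in a commutative $n$-ary semigroup with scalar identity the binary operation $x\ast y=g(x,y,1^{(n-2)})$ is a commutative, associative monoid operation with unit $1$, so that $g(s_1^n)=s_1\ast\cdots\ast s_n$ and $g(s,a,1^{(n-2)})=(s_i\ast a)\ast\prod_{j\neq i}s_j$. Making this reduction rigorous—checking associativity of $\ast$ and that inserting copies of $1$ is harmless under generalized associativity—is the one genuinely computational step; the rest is a direct appeal to Theorem \ref{35}, the closure of $S$ under $g$, and the hyperideal property of the $I_i$.
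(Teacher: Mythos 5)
Your proof is correct, and it shares the paper's skeleton: both arguments invoke Theorem \ref{35} to produce witnesses $s_1,\dots,s_n \in S$ and then merge them into the single witness $s=g(s_1^n)$, which lies in $S$ by $n$-ary multiplicativity. The instantiation differs, though, in a way that works to your advantage. The paper applies Theorem \ref{35} with $J=I_i$ to get $g(s_i,\sqrt{I_i}^{(m,n)},1^{(n-2)})\subseteq I_i$ for each $i$, and then rewrites $\sqrt{\cap_{i=1}^n I_i}^{(m,n)}=\cap_{i=1}^n\sqrt{I_i}^{(m,n)}$ --- an equality it asserts without justification (it does hold for finite intersections, but in the Krasner $(m,n)$-hyperring setting it deserves an argument). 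By taking $J=\cap_{i=1}^n I_i$ directly, you need only the trivial inclusion $J\subseteq I_i$, so the radical-of-intersection identity never enters. You also spell out the step the paper leaves silent: why $s=g(s_1^n)$ inherits the property from each individual $s_i$. Your rearrangement $g(g(s_1^n),a,1^{(n-2)})=g\big(g(s_i,a,1^{(n-2)}),s_1,\dots,\widehat{s_i},\dots,s_n\big)$ is legitimate --- it is the same commutativity-plus-associativity shuffle the paper itself performs routinely, e.g.\ in the proof of Theorem \ref{31} --- and the conclusion then follows from the absorption property $g(x_1^{i-1},I_i,x_{i+1}^n)\subseteq I_i$ of hyperideals. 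Your closing observation that $x\ast y=g(x,y,1^{(n-2)})$ is an associative commutative monoid operation with $s_1\ast\cdots\ast s_n=g(s_1^n)$ (via the scalar identity and generalized associativity, by induction one gets $s_1\ast\cdots\ast s_k=g(s_1^k,1^{(n-k)})$ for $k\leq n$) is a clean way to certify that shuffle. In short: same key lemma and same combined witness, but your choice of $J$ is tighter, and your write-up fills the two gaps that the paper's three-line proof glosses over.
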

\begin{proof}
Let $I_1^n$ be  n-ary $S$-prime hyperideals of $R$. Then for each $1 \leq i \leq n$ we have $s_i \in S$ such that for all $x_1^n$ of $R$, if $g(x_1^n) \in I_i$, then $g(s_i,x_j,1^{(n-2)}) \in I_i$ for some $1 \leq j \leq n$. By Theorem \ref{35}, we get $g(s_i,\sqrt{I_i}^{(m,n)},1^{(n-2)}) \subseteq I_i$  for each $1 \leq i \leq n$. Put $s=g(s_1^n)$. Hence we obtain $g(s,\sqrt{\cap_{i=1}^n I_i},1^{(n-2)}) =g(s,\cap_{i=1}^n \sqrt{I_i}^{(m,n)},1^{(n-2)}) \subseteq \cap_{i=1}^n I_i$.
\end{proof}
\begin{theorem}  \label{37}
Let $(R_1,f_1,g_1),  (R_2,f_2,g_2)$ be  Krasner $(m,n)$-hyperrings and $h:R_1 \longrightarrow R_2$ be a homomorphism such that $0 \notin h(S)$. If $I_2$ is an n-ary $h(S)$-prime hyperideal of $R_2$, then $h^{-1}(I_2)$ is an n-ary $S$-prime hyperideal of $R_1$.
\end{theorem}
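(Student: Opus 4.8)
The plan is to transport the defining property of the $h(S)$-prime hyperideal $I_2$ back along $h$. First I would record the preliminary facts that make the statement well posed. Since $h$ is a homomorphism and $S$ is $n$-ary multiplicative, for any $h(s_1),\dots,h(s_n) \in h(S)$ we have $g_2(h(s_1),\dots,h(s_n)) = h(g_1(s_1^n)) \in h(S)$, so $h(S)$ is an $n$-ary multiplicative subset of $R_2$, and the hypothesis $0 \notin h(S)$ guarantees it is an admissible one. Next, $h^{-1}(I_2)$ is a hyperideal of $R_1$, being the preimage of a hyperideal under a homomorphism, and it is disjoint from $S$: if some $s \in S$ lay in $h^{-1}(I_2)$, then $h(s) \in I_2 \cap h(S)$, contradicting $I_2 \cap h(S) = \varnothing$, which holds because $I_2$ is $h(S)$-prime.

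I would then carry out the core pullback. Since $I_2$ is $h(S)$-prime, fix a witness $t \in h(S)$, say $t = h(s)$ with $s \in S$, so that $g_2(y_1^n) \in I_2$ forces $g_2(t, y_i, 1^{(n-2)}) \in I_2$ for some $1 \leq i \leq n$. Now suppose $g_1(x_1^n) \in h^{-1}(I_2)$ for some $x_1^n \in R_1$. Applying $h$ and using that it respects $g$ gives $g_2(h(x_1),\dots,h(x_n)) = h(g_1(x_1^n)) \in I_2$. The $h(S)$-prime property then yields $g_2(h(s), h(x_i), 1^{(n-2)}) \in I_2$ for some $1 \leq i \leq n$. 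Rewriting the left-hand side as $h(g_1(s, x_i, 1^{(n-2)}))$, I conclude $g_1(s, x_i, 1^{(n-2)}) \in h^{-1}(I_2)$, which is exactly the $S$-prime condition for $h^{-1}(I_2)$ witnessed by $s$.

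The hard part is a subtlety rather than a computation: the step rewriting $g_2(h(s), h(x_i), 1^{(n-2)})$ as $h(g_1(s, x_i, 1^{(n-2)}))$ silently requires $h(1_{R_1}) = 1_{R_2}$, i.e.\ that $h$ preserves the scalar identity, whereas the bare homomorphism definition in the excerpt only demands compatibility with $f$ and $g$. I would therefore either invoke the standing convention that the homomorphisms considered are unital, or first verify $h(1_{R_1}) = 1_{R_2}$ from the context, before freely moving the scalar identities through $h$. Once that identity is in hand, every remaining manipulation is a routine application of the operation axioms, and nothing deeper is required. As an alternative organization, one could instead invoke Theorem \ref{31} and show directly that $h^{-1}(I_2 : t) = (h^{-1}(I_2) : s)$ is $n$-ary prime, but the direct pullback above is the most economical route.
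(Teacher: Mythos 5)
Your proof is correct and follows essentially the same route as the paper's: fix the witness $s \in S$ with $h(s)$ witnessing the $h(S)$-prime property, apply $h$ to $g_1(x_1^n)$, invoke the property in $R_2$, and rewrite $g_2(h(s),h(x_i),1^{(n-2)})$ as $h(g_1(s,x_i,1^{(n-2)}))$ to pull the conclusion back to $h^{-1}(I_2)$. Your flag that this rewriting tacitly uses $h(1_{R_1})=1_{R_2}$ identifies a step the paper performs silently, so your added care is warranted but does not alter the argument.
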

\begin{proof}
Suppose that $I_2$ is an n-ary $h(S)$-prime hyperideal of $R_2$. Then there exists $s \in S$ such that for all $y_1^n \in R_2$ with $g_2(y_1^n) \in I_2$ we have $g_2(h(s),y_i,1^{(n-2)}) \in I_2$ for some $1 \leq i \leq n$. Put $I_1=h^{-1}(I_2)$. It is easy to see that $I_1 \cap S = \varnothing$. Let $g_1(x_1^n) \in I_1$ for $x_1^n \in R_1$. Then $h(g_1(x_1^n))=g_2(h(x_1),...,h(x_n)) \in I_2$. So, we have $g_2(h(s),h(x_i),1^{(n-2)})=h(g_1(s,x_i,1^{(n-2)}) \in I_2$ for some $1 \leq i \leq n$ which implies $g_1(s,x_i,1^{(n-2)}) \in h^{-1}(I_2)=I_1$. Consequently, $h^{-1}(I_2)$ is an n-ary $S$-prime hyperideal of $R_1$.
\end{proof}
The concept of Krasner $(m,n)$-hyperring of fractions  was introduced in \cite{mah5}.
\begin{theorem} 
Let $R$ be a Krasner $(m,n)$-hyperring and $S$ be an $n$-ary multiplicative subset of $R$ with $1 \
\in S$. If $I$ is an $n$-ary $S$-prime hyperideal of $R$ with $I \cap S=\varnothing$, then $S^{-1}I$ is an $n$-ary prime hyperideal of $S^{-1}R$.
\end{theorem}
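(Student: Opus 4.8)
The plan is to reduce the primeness of $S^{-1}I$ to the primeness of a suitable colon hyperideal of $R$. Since $I$ is $n$-ary $S$-prime, Theorem \ref{31} furnishes an element $s \in S$ for which $(I:s)=\{r \in R \mid g(r,s,1^{(n-2)}) \in I\}$ is an $n$-ary prime hyperideal of $R$. First I would record two elementary facts about this $s$: that $I \subseteq (I:s)$, which is immediate because $g(a,s,1^{(n-2)}) \in I$ for every $a \in I$ (as $I$ is a hyperideal), and that $(I:s) \cap S = \varnothing$. The latter holds because if some $w$ lay in $(I:s)\cap S$, then $g(w,s,1^{(n-2)})$ would simultaneously belong to $I$ and, since $w,s,1 \in S$ and $S$ is $n$-ary multiplicative, to $S$, contradicting $I \cap S = \varnothing$.

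Next I would prove the key identity $S^{-1}I = S^{-1}(I:s)$. The inclusion $S^{-1}I \subseteq S^{-1}(I:s)$ is clear from $I \subseteq (I:s)$. For the reverse inclusion, take $a/t \in S^{-1}(I:s)$ with $a \in (I:s)$ and $t \in S$; then $g(s,a,1^{(n-2)}) \in I$, and since multiplying numerator and denominator by $s \in S$ does not change a fraction, the expression $a/t = g(s,a,1^{(n-2)})/g(s,t,1^{(n-2)})$ exhibits $a/t$ as a quotient of an element of $I$ by an element of $S$, i.e. $a/t \in S^{-1}I$. The same device shows $S^{-1}I$ is proper: were $1/1 \in S^{-1}I$, one would obtain $u \in S$ forcing a product of elements of $S$ into $I$, again contradicting disjointness. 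Thus it suffices to show that the localization of the prime hyperideal $P:=(I:s)$, which is disjoint from $S$ and satisfies $1 \in S$, is an $n$-ary prime hyperideal of $S^{-1}R$.

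For this last step I would verify the elementwise criterion directly. Writing the induced operation as $G(x_1/t_1,\dots,x_n/t_n)=g(x_1^n)/g(t_1^n)$, suppose $G(x_1/t_1,\dots,x_n/t_n) \in S^{-1}P$, say $g(x_1^n)/g(t_1^n)=c/w$ with $c \in P$, $w \in S$. By the construction of $S^{-1}R$ from \cite{mah5}, this equality of fractions yields $u \in S$ so that, after cross-multiplication, $0 \in f\big(g(u,w,g(x_1^n),1^{(n-3)}),\,-g(u,g(t_1^n),c,1^{(n-3)}),\,0^{(m-2)}\big)$. Since $c \in P$ and $P$ is a hyperideal closed under additive inverses, the second argument lies in $P$; applying the reversibility axiom of the canonical $m$-ary hypergroup together with the fact that $P$ is an $m$-ary subhypergroup, I extract $g(u,w,g(x_1^n),1^{(n-3)}) \in P$. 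Rewriting this element by commutativity and generalized associativity of $g$ as $g\big(g(u,w,x_1,1^{(n-3)}),x_2,\dots,x_n\big)$ and invoking the elementwise characterization of prime hyperideals (Lemma 4.5 in \cite{sorc1}), one of the factors lies in $P$; in either case, after discarding the $S$-coefficient (which cannot itself lie in $P$ because $P \cap S = \varnothing$), some $x_i$ satisfies $g(s',x_i,1^{(n-2)}) \in P$ for an $s' \in S$, and hence $x_i/t_i = g(s',x_i,1^{(n-2)})/g(s',t_i,1^{(n-2)}) \in S^{-1}P$, which is exactly the required conclusion.

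The main obstacle I anticipate is this last paragraph: translating the equality of fractions into an honest membership in $P$. Because addition in $S^{-1}R$ is the $m$-ary hyperoperation induced by $f$, the cross-multiplication condition is not an equation but a membership $0 \in f(\cdots)$ involving additive inverses, so the cancellation that is routine over a ring must here be carried out through the reversibility axiom of the canonical hypergroup and the subhypergroup property of $P$. Once that membership is isolated, the remaining manipulations — rearranging $g$ via commutativity and associativity and removing the $S$-factor using $P \cap S = \varnothing$ — are routine.
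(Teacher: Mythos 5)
Your proof is correct, but it takes a genuinely different route from the paper's. The paper argues directly from the definition of an $n$-ary $S$-prime hyperideal: from $G(\frac{a_1}{s_1},\dots,\frac{a_n}{s_n}) \in S^{-1}I$ it extracts, via the membership criterion for fractions from \cite{mah5}, an element $t \in S$ with $g(t,g(a_1^n),1^{(n-2)}) \in I$; it then rebrackets so that $t$ is absorbed into one factor, applies $S$-primeness of $I$ itself to obtain $g(s,a_i,1^{(n-2)}) \in I$ or $g(s,t,a_i,1^{(n-3)}) \in I$ for some $i$, and rescales numerator and denominator (by $s$, resp.\ by $g(s,t,1^{(n-2)})$) to conclude $\frac{a_i}{s_i} \in S^{-1}I$. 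You instead modularize along classical commutative-algebra lines: Theorem \ref{31} supplies a prime $P=(I:s)$, you verify $P \cap S=\varnothing$ (correctly, using $1\in S$ and $n$-ary multiplicativity), you establish the identity $S^{-1}I=S^{-1}(I:s)$ by the same rescaling device, and you then prove elementwise that the localization of a prime hyperideal disjoint from $S$ is prime — which is exactly where the disjointness $P\cap S=\varnothing$ earns its keep, since it lets you discard the $S$-factors $u,w$ after applying the elementwise prime criterion to $g(g(u,w,x_1,1^{(n-3)}),x_2,\dots,x_n)\in P$. Two remarks. First, the obstacle you flag at the end is in fact harmless here: in a canonical $m$-ary hypergroup, $0 \in f(a,-b,0^{(m-2)})$ forces $a=b$ (by reversibility and the scalar neutral element), and $g$ is single-valued in a Krasner $(m,n)$-hyperring, so the cross-multiplied relation from \cite{mah5} is an honest equality; your reversibility argument is precisely the valid way to land the relevant element in $P$. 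Second, your proof checks that $S^{-1}I$ is proper, a point the paper's proof passes over silently even though primeness requires it. What the paper's direct route buys is brevity and independence from the colon-ideal characterization; what yours buys is modularity — the facts $S^{-1}I=S^{-1}(I:s)$ and "localization of a prime disjoint from $S$ is prime" are reusable on their own — together with a cleaner separation of the localization mechanics from the $S$-prime hypothesis.
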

\begin{proof}
Let $G(\frac{a_1}{s_1},...,\frac{a_n}{s_n}) \in S^{-1}I$ for $\frac{a_1}{s_1},...,\frac{a_n}{s_n} \in S^{-1}R$.
Then we have $\frac{g(a_1^n)}{g(s_1^n)} \in S^{-1}I$. It implies that there exists $t \in S$ such that $g(t,g(a_1^n),1^{(n-2)}) \in I$. Since $I$ is an $n$-ary $S$-prime hyperideal of $R$ and $I \cap S=\varnothing$, then  there exists $s \in S$ such that $g(s,a_i,1^{(n-2)}) \in I$ or $g(s,g(t,a_i,1^{(n-2)}),1^{(n-2)})=g(s,t,a_i,1^{(n-3)}) \in I$   for some $1 \leq i \leq n$. Hence we conclude that $G(\frac{a_i}{s_i},\frac{1}{1}^{(n-1)})=\frac{g(a_i,1^{(n-1)})}{g(s_i,1^{(n-1)}))}=\frac{g(s,a_i,1^{(n-2)})}{g(s,s_i,1^{(n-2)})}  \in S^{-1}I$ or  $G(\frac{a_i}{s_i},\frac{1}{1}^{(n-1)})=\frac{g(a_i,1^{(n-1)})}{g(s_i,1^{(n-1)})}=\frac{g(s,t,a_i,1^{(n-3)})}{g(s,t,s_i,1^{(n-2)})}  \in S^{-1}I$ for some $1 \leq i \leq n$ . Thus $S^{-1}I$ is an $n$-ary prime hyperideal of $S^{-1}R$.
\end{proof}
\begin{theorem}
Let $R$ be a Krasner $(m,n)$-hyperring, $S$ be an $n$-ary multiplicative subset of $R$ with $1 \in S$ and $I$ be a hyperideal of $R$ with $I \cap S=\varnothing$ . If $S^{-1}I$ is an $n$-ary prime hyperideal of $S^{-1}R$ and $S^{-1}I \cap R=(I:s)$ for some $ s \in S$, then $I$ is an $n$-ary $S$-prime hyperideal of $R$.
\end{theorem}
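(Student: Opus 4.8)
The plan is to reduce everything to the prime-hyperideal characterization of Theorem \ref{31}: that result says $I$ is $n$-ary $S$-prime precisely when $(I:s)$ is an $n$-ary prime hyperideal of $R$ for some $s \in S$. We are handed exactly such a candidate, namely the element $s \in S$ for which $S^{-1}I \cap R = (I:s)$. Thus the whole task is to show that this contraction $(I:s)$ is an $n$-ary prime hyperideal, after which Theorem \ref{31} closes the argument at once.

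First I would verify that $(I:s)$ is proper. If instead $1 \in (I:s)$, then by the very definition of $(I:s)$ we would have $g(1,s,1^{(n-2)}) = s \in I$, using that $1$ is the scalar identity of $(R,g)$; but $s \in S$ while $I \cap S = \varnothing$, a contradiction. Hence $1 \notin (I:s)$ and $(I:s) \neq R$.

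For primeness I would apply the characterization of Lemma 4.5 (in the form valid for the hyperring of fractions $S^{-1}R$). Let $\varphi \colon R \longrightarrow S^{-1}R$ denote the canonical localization homomorphism of Krasner $(m,n)$-hyperrings from \cite{mah5}, so that the contraction is $S^{-1}I \cap R = \varphi^{-1}(S^{-1}I)$. Suppose $g(x_1^n) \in (I:s) = S^{-1}I \cap R$ for some $x_1^n \in R$. Then $\varphi(g(x_1^n)) \in S^{-1}I$, and since $\varphi$ is a homomorphism we may rewrite this as $G(\varphi(x_1),\dots,\varphi(x_n)) \in S^{-1}I$, where $G$ is the induced $n$-ary operation on $S^{-1}R$. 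Because $S^{-1}I$ is an $n$-ary prime hyperideal of $S^{-1}R$, the prime characterization forces $\varphi(x_i) \in S^{-1}I$ for some $1 \leq i \leq n$, that is, $x_i \in \varphi^{-1}(S^{-1}I) = S^{-1}I \cap R = (I:s)$. This establishes that $(I:s)$ is an $n$-ary prime hyperideal of $R$, and invoking Theorem \ref{31} we conclude that $I$ is an $n$-ary $S$-prime hyperideal of $R$.

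The main obstacle I anticipate is purely the bookkeeping surrounding the localization, rather than any genuine hyperoperation computation. Concretely, one must make precise that $S^{-1}I \cap R$ is indeed the $\varphi$-preimage $\varphi^{-1}(S^{-1}I)$, so that membership of $g(x_1^n)$ in the contraction is equivalent to $\varphi(g(x_1^n)) \in S^{-1}I$, and one must record the identity $\varphi(g(x_1^n)) = G(\varphi(x_1),\dots,\varphi(x_n))$ coming from the construction of the hyperring of fractions. Once these two facts from \cite{mah5} are in place, the primeness of $S^{-1}I$ transfers downward to $(I:s)$ immediately, with no need to manipulate fractions by hand.
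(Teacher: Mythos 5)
Your proposal is correct, and it reaches the conclusion by a genuinely different (and arguably cleaner) factorization than the paper's. The paper verifies the definition of an $n$-ary $S$-prime hyperideal directly: given $g(a_1^n) \in I$, it passes to $G(\frac{a_1}{1},\cdots,\frac{a_n}{1}) \in S^{-1}I$, applies primeness of $S^{-1}I$ to obtain $\frac{a_i}{1} \in S^{-1}I$ (making a small detour through an auxiliary $t \in S$ with $g(t,a_i,1^{(n-2)}) \in I$, which your argument avoids), and then uses the hypothesis $S^{-1}I \cap R=(I:s)$ to conclude $g(s,a_i,1^{(n-2)}) \in I$, the fixed $s$ serving as the witness. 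You instead prove the stronger intermediate statement that the contraction $(I:s)=\varphi^{-1}(S^{-1}I)$ is itself an $n$-ary prime hyperideal of $R$ --- properness since $1 \in (I:s)$ would force $s \in I \cap S$, and primeness by the same localize--apply-primeness--contract computation --- and then invoke the characterization of Theorem \ref{31}. The computational core is identical in both arguments; what your route buys is that the hypothesis $S^{-1}I \cap R=(I:s)$ is exposed as saying precisely that $(I:s)$ is the contraction of a prime, so Theorem \ref{31} applies verbatim and $s$ never has to be tracked through fraction manipulations. Your explicit remark that $S^{-1}I \cap R$ must be read as the preimage under the canonical map $\varphi$ is a point the paper leaves implicit through the identification $a_i=\frac{a_i}{1}$. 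The one tacit ingredient common to both proofs is the elementwise characterization of primeness for $S^{-1}I$ (the converse direction of Lemma 2.7, obtained by passing to generated hyperideals), which the paper also uses freely elsewhere, so no gap results.
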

\begin{proof}
Let $S^{-1}I$ be an $n$-ary prime hyperideal of $S^{-1}R$ and $S^{-1}I \cap R=(I:s)$ for some $ s \in S$. Assume that $g(a_1^n) \in I$ for some $a_1^n \in R$. Then we get $G(\frac{a_1}{1},\cdots,\frac{a_n}{1}) \in S^{-1}I$. Since $S^{-1}I$ is an $n$-ary prime hyperideal of $S^{-1}R$, we obtain $\frac{a_i}{1} \in S^{-1}I$ for some $1 \leq i \leq n$ which implies $g(t,a_i,1^{(n-2)}) \in I$ for some $t \in S$. Hence $a_i=\frac{g(t,a_i,1^{(n-2)})}{g(t,1^{(n-1)})} \in S^{-1}I$. This means $a_i \in (I:s)$ for somr $s \in S$. Therefore we have $g(s,a_i,1^{(n-2)}) \in I$. Thus we conclude that $I$ is an $n$-ary $S$-prime hyperideal of $R$.
\end{proof}
Let $J$ be a hyperideal of a Krasner $(m, n)$-hyperring $(R, f, g)$. Then the set

$R/J = \{f(x^{i-1}_1, J, x^m_{i+1}) \ \vert \  x^{i-1}_1,x^m_{i+1} \in R \}$\\
endowed with m-ary hyperoperation $f$ which for all $x_{11}^{1m},...,x_{m1}^{mm} \in R$

$f(f(x_{11}^{1 (i-1)},J, x^{1m}_ {1(i+1)}),..., f(x_{m1}^{ m(i-1)}, J, x^{mm}_ {m(i+1)}))$ 

$= f (f(x^{m1}_{11}),..., f(x^{m(i-1)}_{1(i-1)}), J, f(x^{m(i+1)}_{1(i+1)} ),..., f(x^{mm}_ {1m}))$\\
and with $n$-ary hyperoperation g which for all $x_{11}^{1m},...,x_{n1}^{nm} \in R$

$g(f(x_{11}^{1 (i-1)}, J, x^{1m}_ {1(i+1)}),..., f(x_{n1}^{ n(i-1)}, J, x^{nm}_ {n(i+1)}))$ 

$= f (g(x^{n1}_{11}),..., g(x^{n(i-1)}_{1(i-1)}), J, g(x^{n(i+1)}_{1(i+1)} ),..., f(x^{nm}_ {1m}))$\\
construct a Krasner $(m, n)$-hyperring, and $(R/J, f, g)$  is called the quotient Krasner $(m, n)$-hyperring of $R$ by $J$ \cite{sorc1}. 

\begin{theorem} \label{51}
Let $S$ be  an n-ary multiplicative subset of a Krasner $(m,n)$-hyperring $R$ and let $I$ and $J$ be a hyperideals of $R$ such that $J \subseteq I$. Let $J \cap S=\varnothing$ and $I/J \cap \bar{S}=\varnothing$ with $\bar{S}=\{f(s_1^{i-1},J,s_{i+1}^n) \ \vert \ s_1^{i-1},s_{i+1}^n \in S\}$. If $I$ is an  n-ary $S$-prime hyperideal of $R$, then $I/J$ is an n-ary $\bar{S}$-prime hyperideal of $R/J$.
\end{theorem}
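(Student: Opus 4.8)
The plan is to transport the $S$-prime witness for $I$ down to a $\bar{S}$-prime witness for $I/J$ through the canonical projection. First I would fix notation: write $\pi : R \longrightarrow R/J$ for the natural map sending an element $x$ to its coset $\bar{x} = f(x, J, 0^{(m-2)})$, where $0$ is the identity of the $m$-ary hyperoperation $f$. The quotient construction recalled just above makes $\pi$ a homomorphism of Krasner $(m,n)$-hyperrings; in particular $g(\bar{x}_1, \dots, \bar{x}_n) = \overline{g(x_1^n)}$ and $\pi(1_R) = \bar{1}$ is the scalar identity of $R/J$. The second preliminary fact I would record is the coset correspondence: since $J \subseteq I$ and $I$ is a hyperideal, $I$ is a union of $J$-cosets, so for every $y \in R$ one has $\bar{y} \in I/J$ if and only if $y \in I$. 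Finally, the witness I intend to use is $\bar{s} \in \bar{S}$, the image under $\pi$ of the element $s \in S$ witnessing that $I$ is n-ary $S$-prime.

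With these in hand, the main argument is short. Since $I$ is n-ary $S$-prime, fix $s \in S$ such that $g(x_1^n) \in I$ forces $g(s, x_i, 1^{(n-2)}) \in I$ for some $1 \leq i \leq n$. Now take arbitrary cosets $\bar{x}_1, \dots, \bar{x}_n \in R/J$ with $g(\bar{x}_1, \dots, \bar{x}_n) \in I/J$. Using the homomorphism property, $g(\bar{x}_1, \dots, \bar{x}_n) = \overline{g(x_1^n)}$, so the coset correspondence gives $g(x_1^n) \in I$. Applying the $S$-prime condition yields $g(s, x_i, 1^{(n-2)}) \in I$ for some $i$. Pushing this through $\pi$ and using that $\pi$ respects $g$ and sends $1_R$ to $\bar{1}$, I obtain $g(\bar{s}, \bar{x}_i, \bar{1}^{(n-2)}) = \overline{g(s, x_i, 1^{(n-2)})} \in I/J$ for the same index $i$. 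Together with the hypothesis $I/J \cap \bar{S} = \varnothing$, this is exactly the statement that $I/J$ is n-ary $\bar{S}$-prime with witness $\bar{s}$.

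The steps that require genuine care — and where I expect the main obstacle to lie — are the two preliminary facts rather than the final deduction. The delicate point is to verify rigorously that the quotient $n$-ary operation is single-valued on cosets and that $g(\bar{x}_1, \dots, \bar{x}_n)$ really equals the coset $\overline{g(x_1^n)}$; this means unwinding the displayed definition of the quotient operation $g$ on $R/J$ (the formula involving $f(\dots, J, \dots)$) and checking independence of the chosen representatives, which is precisely where the hypothesis $J \subseteq I$ enters to guarantee that membership of a coset in $I/J$ is well defined. I would also confirm that $\bar{s}$ genuinely lies in $\bar{S}$ as that set is defined, and note that the hypothesis $I/J \cap \bar{S} = \varnothing$ certifies $I/J$ as an admissible candidate for $\bar{S}$-primeness. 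Once the projection is confirmed to be a well-defined homomorphism with the stated coset correspondence, the transport of the witness goes through verbatim.
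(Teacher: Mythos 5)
Your proposal is correct and follows essentially the same route as the paper's proof: both fix the witness $s\in S$ for $I$, use that the quotient operation satisfies $g/J$ of cosets equals the coset of $g(x_1^n)$ together with the fact that, since $J\subseteq I$, a coset belongs to $I/J$ exactly when a representative belongs to $I$, and then transport the conclusion to the witness $\bar{s}=\pi(s)\in\bar{S}$. The paper merely carries out this transport by direct computation with multi-representative cosets $f(x_{j1}^{j(i-1)},J,x_{j(i+1)}^{jm})$ instead of through an explicitly named projection homomorphism, so the difference is one of bookkeeping, not of substance.
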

\begin{proof} Let $I$ be an  n-ary $S$-prime hyperideal of $R$. Then there exists some $s \in S$ such that if $g(x_1^n) \in I$ for $x_1^n \in R$, then we get $g(s,x_i,1^{(n-2)}) \in I$ for some $1 \leq i \leq n$.
Let 

$g(f(x_{11}^{1(i-1)},J,x_{1(i+1)}^{1m}),...,f(x_{n1}^{n(i-1)},J,x_{n(i+1)}^{nm})) \in I/J$\\ for some $f(x_{11}^{1(i-1)},J,x_{1(i+1)}^{1m}),...,f(x_{n1}^{n(i-1)},J,x_{n(i+1)}^{nm}) \in R/J$. \\This means 

$f(g(x_{11}^{n1}),...,g(x_{1(i-1)}^{n(i-1)}),J,g(x_{1(i+1)}^{n(i+1)}),...,g(x_{1m}^{nm})) \in I/J$.\\ Then $f(g(x_{11}^{n1}),...,g(x_{1(i-1)}^{n(i-1)}),0,g(x_{1(i+1)}^{n(i+1)}),...,g(x_{1m}^{nm})) \subseteq I$ which implies

 $g(f(x_{11}^{1(i-1)},0,x_{1(i+1)}^{1m}),...,f(x_{n1}^{n(i-1)},0,x_{n(i+1)}^{nm}) ) \subseteq  I$.\\
Since $I$ is an  n-ary $S$-prime hyperideal of $R$, then, for some $1 \leq j \leq n$, we obtain

  $g(s,f(x_{j1}^{j(i-1)},0,x_{j(i+1)}^{jm}),1^{n-2}) \subseteq I$ \\  Therefore 
  
  $f(g(s,f(x_{j1}^{j(i-1)},0,x_{j(i+1)}^{jm}),1^{n-2}),J,0^{(m-2)}) \in I/J$ \\and so

  $f(g(g(s,1^{(n-2)}),f(x_{j1}^{j(i-1)},0,x_{j(i+1)}^{jm}),1^{n-2}),J,0^{(m-2)}) \in I/J$.\\
  Thus we conclude that

  $g(f(s,J,1^{(n-2)}),f(x_{j1}^{j(i-1)},J,x_{j(i+1)}^{jm}),1_{R/J}^{(n-2)}) \in I/J$
\\Consequently, $I/J$ is an n-ary $\bar{S}$-prime hyperideal of $R/J$.

\end{proof}

Suppose that $I$ is a normal hyperideal of Krasner $(m,n)$-hyperring $(R,f,g)$. Then the set of all equivalence classes $[R:I^*]=\{I^*[x] \ \vert \ x \in R\}$ is a Krasner $(m,n)$-hyperring with the m-ary hyperoperation  $f/I$ and the n-ary operation $g/I$, defined as follows:

$f/I(I^*[x_1], \cdots,I^*[x_m])=\{I^*[z] \ \vert \ z \in f(I^*[x_1], \cdots, I^*[x_m])\}, \ \ \forall x_1^m \in R$

$g/I(I^*[x_1], \cdots,I^*[x_n])=I^*[g(x_1^n)], \ \ \forall x_1^n \in R$\\(for more details refer to \cite{d1}). Now, we establish the following result.
\begin{theorem}
Let $S$ be  an n-ary multiplicative subset of a Krasner $(m,n)$-hyperring $(R,f,g)$ and let  $I$ be a normal hyperideal of $R$. If $J$ is an n-ary $S$-prime hyperideal of $R$ such that $I \subseteq J$, then $[J:I^*]$ is an n-ary $[S:I^*]$-prime hyperideal of $[R:I^*]$
\end{theorem}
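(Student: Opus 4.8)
The plan is to show that the single element $s\in S$ witnessing that $J$ is n-ary $S$-prime in $R$ descends to a witness $I^*[s]\in[S:I^*]$ for $[J:I^*]$. Note first that $[J:I^*]=\{I^*[x]\mid x\in J\}$ is a hyperideal of $[R:I^*]$ by the standard correspondence coming from the quotient construction, so only disjointness from $[S:I^*]$ and the defining implication remain. The crux of the argument is a translation lemma: for every $a\in R$ one has $I^*[a]\in[J:I^*]$ if and only if $a\in J$. The ``if'' direction is immediate from the description of $[J:I^*]$. For the ``only if'' direction, $I^*[a]=I^*[j]$ for some $j\in J$ says that $a$ and $j$ lie in the same $I^*$-class; since $I\subseteq J$ and $J$ is a hyperideal closed under $f$ and under inverses, absorbing the $I$-part of the relation into $J$ and using reproducibility in the canonical $m$-ary hypergroup $(R,f)$ forces $a\in J$. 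This equivalence is the linchpin of the whole proof.

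With the translation lemma in hand, disjointness $[J:I^*]\cap[S:I^*]=\varnothing$ is quick. If $I^*[s']\in[J:I^*]$ for some $s'\in S$, then the lemma forces $s'\in J$, contradicting $J\cap S=\varnothing$, which holds precisely because $J$ is n-ary $S$-prime (hence disjoint from $S$).

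For the defining property I would fix the witness $s\in S$ supplied by the hypothesis on $J$ and take $I^*[s]\in[S:I^*]$ as the candidate witness for $[J:I^*]$. Suppose $g/I(I^*[x_1],\cdots,I^*[x_n])\in[J:I^*]$. Since $g/I(I^*[x_1],\cdots,I^*[x_n])=I^*[g(x_1^n)]$ by the definition of the quotient operation, the translation lemma yields $g(x_1^n)\in J$. The n-ary $S$-prime property of $J$ then produces an index $i$ with $g(s,x_i,1^{(n-2)})\in J$. Transporting this back through the lemma, and using both $g/I(I^*[s],I^*[x_i],1_{[R:I^*]}^{(n-2)})=I^*[g(s,x_i,1^{(n-2)})]$ and $1_{[R:I^*]}=I^*[1_R]$, I conclude $g/I(I^*[s],I^*[x_i],1_{[R:I^*]}^{(n-2)})\in[J:I^*]$, which is exactly the n-ary $[S:I^*]$-prime condition.

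I expect the only genuine obstacle to be the careful verification of the translation lemma, in particular handling the $I^*$-equivalence through the canonical hypergroup axioms and pinpointing where $I\subseteq J$ is used to absorb the $I$-part into $J$. Once that step is settled, the rest is a direct transport of the defining property of $J$ across the projection $x\mapsto I^*[x]$, with each application of $g/I$ reducing to $g$ on representatives.
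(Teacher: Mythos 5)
Your proposal is correct and follows essentially the same route as the paper: both fix the witness $s\in S$ for $J$, use $g/I(I^*[x_1],\cdots,I^*[x_n])=I^*[g(x_1^n)]$ to pass between $[R:I^*]$ and representatives, and exploit $I\subseteq J$ to translate membership of a class in $[J:I^*]$ into membership of a representative in $J$ and back. The paper implements your ``translation lemma'' concretely by writing $I^*[a]=f(I,a,0^{(m-2)})$ and checking $I^*[a]\subseteq J$ (the absorption step you flagged), so the only difference is that you phrase this step as an explicit elementwise equivalence while the paper carries it out at the level of subsets.
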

\begin{proof}
First of all, notice that $S \cap J = \varnothing$ if and only if $[S:I^*] \cap [J:I^*]=\varnothing$. Let $g/I(I^*[x_1],\cdots,I^*[x_n]) \in [J:I^*]$ for some $x_1^n \in R$. Then $I^*[g(x_1^n)] \in [J:I^*]$. This means  $I^*[g(x_1^n)] \subseteq J$. So

$I^*[g(x_1^n)]=f(I,g(x_1^n),0^{(m-2)})=f(I,g(x_1^n),g(0^{(n)})^{(m-2)})$

$\hspace{1.5cm}=g(f(I,x_1,0^{(m-2)}),\cdots,f(I,x_n,0^{(m-2)})) \subseteq J$\\
Since $J$ is an n-ary $S$-prime hyperideal of $R$, then there exists $s \in S$ such that $g(s,f(I,x_i,0^{(m-2)}),1^{(n-2)}) \subseteq J$ for some $ 1 \leq i \leq n$ which implies

 $f(I,g(s,x_i,1^{(n-2)}),0^{(m-2)}) \subseteq J$. \\Hence $I^*[g(s,x_i,1^{(n-2)})] \in [J:I^*]$ which means $g/I(I^*[s],I^*[x_i],I^*[1]^{(n-2)}) \in [J:I^*]$. Thus $[J:I^*]$ is an n-ary $[S:I^*]$-prime hyperideal of $[R:I^*]$.
\end{proof}
Let $(R_1, f_1, g_1)$ and $(R_2, f_2, g_2)$ be two Krasner $(m,n)$-hyperrings such that $1_{R_1}$ and $1_{R_2}$ be scalar identitis of $R_1$ and $R_2$, respectively. Then 
the $(m, n)$-hyperring $(R_1 \times R_2, f_1\times f_2 ,g_1 \times g_2 )$ is defined by m-ary hyperoperation
$f=f_1\times f_2 $ and n-ary operation $g=g_1 \times g_2$, as follows:

$f_1 \times f_2((a_{1}, b_{1}),\cdots,(a_m,b_m)) = \{(a,b) \ \vert \ \ a \in f_1(a_1^m), b \in f_2(b_1^m) \}$

$g_1 \times g_2 ((x_1,y_1),\cdots,(x_n,y_n)) =(g_1(x_1^n),g_2(y_1^n)) $,\\
for all $a_1^m,x_1^n \in R_1$ and $b_1^m,y_1^n \in R_2$ \cite{mah2}. Suppose that  $S=S_1 \times S_2$ such that $S_1$ and $S_2$ are n-ary multiplicative subsets of  $R_1$ and $R_2$, respectively. Assume that $I_1$ is an n-ary $S_1$-prime hyperideal of $R_1$. It is easy to verify that $(I_1 \times R_2) \cap S=\varnothing  \Longleftrightarrow   I_1 \times S_1 = \varnothing$. In the next  theorem, we characterize n-ary $S$-prime hyperideals of $R_1 \times R_2$.
\begin{theorem}
Let $(R_1, f_1, g_1)$ and $(R_2, f_2, g_2)$ be two Krasner $(m,n)$-hyperrings such that $1_{R_1}$ and $1_{R_2}$ be scalar identitis of $R_1$ and $R_2$, respectively. Suppose that  $S=S_1 \times S_2$ such that $S_1$ and $S_2$ are n-ary multiplicative subsets of   $R_1$ and $R_2$, respectively. Then  $I_1$ is an n-ary $S_1$-prime hyperideal of $R_1$ if and only if  $I_1 \times R_2$ is an n-ary $S$-prime hyperideal of $R_1 \times R_2$.
\end{theorem}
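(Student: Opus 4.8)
The plan is to argue both implications directly from the definition of an $n$-ary $S$-prime hyperideal, exploiting that the operations on $R_1 \times R_2$ act coordinatewise and that the second coordinate of $I_1 \times R_2$ is unconstrained. The key preliminary observation, already recorded before the statement, is that $(I_1 \times R_2) \cap S = \varnothing$ if and only if $I_1 \cap S_1 = \varnothing$, so the disjointness hypothesis transfers freely between the two settings and need not be re-examined.

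For the forward direction, I would assume $I_1$ is $n$-ary $S_1$-prime with witness $s_1 \in S_1$, fix any $s_2 \in S_2$ (possible since $S_2$ is nonempty), and claim that $s = (s_1, s_2)$ witnesses that $I_1 \times R_2$ is $n$-ary $S$-prime. Given elements $(x_1, y_1), \dots, (x_n, y_n)$ with $g(\,(x_1,y_1), \dots, (x_n,y_n)\,) = (g_1(x_1^n), g_2(y_1^n)) \in I_1 \times R_2$, membership forces only $g_1(x_1^n) \in I_1$. The $S_1$-prime property then yields an index $i$ with $g_1(s_1, x_i, 1_{R_1}^{(n-2)}) \in I_1$, and since $1_{R_1 \times R_2} = (1_{R_1}, 1_{R_2})$ one computes $g(s, (x_i,y_i), 1^{(n-2)}) = (g_1(s_1, x_i, 1_{R_1}^{(n-2)}), g_2(s_2, y_i, 1_{R_2}^{(n-2)}))$, whose first coordinate lies in $I_1$ and whose second coordinate lies in $R_2$ automatically; hence the whole element lies in $I_1 \times R_2$, as required.

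For the converse, I would start from a witness $s = (s_1, s_2) \in S_1 \times S_2$ for $I_1 \times R_2$ and show that $s_1$ witnesses the $S_1$-prime property of $I_1$. Given $x_1^n \in R_1$ with $g_1(x_1^n) \in I_1$, I would lift to the elements $(x_1, 1_{R_2}), \dots, (x_n, 1_{R_2})$; their $g$-product is $(g_1(x_1^n), g_2(1_{R_2}^{(n)}))$, which lies in $I_1 \times R_2$ because its first coordinate is in $I_1$. Applying the hypothesis gives an index $i$ with $g(s, (x_i, 1_{R_2}), 1^{(n-2)}) \in I_1 \times R_2$; reading off the first coordinate delivers $g_1(s_1, x_i, 1_{R_1}^{(n-2)}) \in I_1$, which is exactly what is needed.

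The only real care required is bookkeeping: one must consistently expand $g = g_1 \times g_2$ coordinatewise and remember that the defining condition for membership in $I_1 \times R_2$ constrains only the first coordinate, the second being vacuously satisfied. There is no genuine obstacle beyond this. Alternatively, one could route both directions through Theorem \ref{31} by checking that $((I_1 \times R_2) : (s_1, s_2)) = (I_1 : s_1) \times R_2$, but this would additionally require establishing that a hyperideal of the form $P_1 \times R_2$ is $n$-ary prime in $R_1 \times R_2$ precisely when $P_1$ is $n$-ary prime in $R_1$, so the direct argument above is preferable for self-containedness.
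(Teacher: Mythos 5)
Your proof is correct and follows essentially the same route as the paper's: expand $g_1\times g_2$ coordinatewise, lift $x_1^n$ to $(x_1,1_{R_2}),\dots,(x_n,1_{R_2})$ for the converse, and read off the first coordinate. If anything, your forward-direction witness $(s_1,s_2)$ with arbitrary $s_2\in S_2$ is slightly more careful than the paper's, which multiplies by $(s_1,1_{R_2})$ --- an element that belongs to $S=S_1\times S_2$ only if $1_{R_2}\in S_2$.
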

\begin{proof}
$\Longrightarrow$ Assume that  $I_1$ is  an n-ary $S_1$-prime hyperideal  of $R_1$. Let $g_1 \times g_2((x_1,y_1), \cdots, (x_n,y_n)) \in I_1 \times R_2$ for some $x_1^n \in R_1$ and $y_1^n \in R_2$. Then we get $g_1(x_1^n) \in I_1$. By the hypothesis, there exists $s_1 \in S_1$ such that $g_1(s_1,x_i,1^{(n-2)}) \in I_1$ for some $1 \leq i \leq n$. Then we have $g_1 \times g_2((s_1,1_{R_2}),(x_i,y_i),(1_{R_1},1_{R_2})^{(n-2)}) \in I_1 \times R_2$. Thus we conclude that $I_1 \times R_2$ is an n-ary $S$-prime hyperideal of $R_1 \times R_2$.\\
$\Longleftrightarrow$ Let $I_1 \times R_2$ be an n-ary $S$-prime hyperideal of $R_1 \times R_2$. Assume that $g_1(x_1^n) \in I_1$ for some $x_1^n \in R_1$. Then $g_1 \times g_2((x_1,1_{R_2}),\cdots (x_n,1_{R_2})) \in I_1 \times R_2$. Since  $I_1 \times R_2$ is an n-ary $S$-prime hyperideal of $R_1 \times R_2$, then there exists an element $(s_1, s_2)$ in $S$ such that  $g_1 \times g_2 ((s_1,s_2),(x_i,1_{R_2}),(1_{R_1},1_{R_2})^{(n-2)}) \in I_1 \times R_2$ for some $1 \leq i \leq n$. This means $g_1(s_1,x_i,1_{R_1}^{(n-2)}) \in I_1$. Consequently, $I_1$ is an n-ary $S_1$-prime hyperideal of $R_1$.
\end{proof}
We give the following results obtained by the previous theorem.
\begin{corollary}
Let $(R_i, f_i, g_i)$  be a Krasner $(m,n)$-hyperring for each $1 \leq i \leq t$ such that $1_{R_i}$ is scalar identity of $R_i$. Assume that  $S=S_1 \times \cdots \times S_t$ such that $S_i$  is an n-ary multiplicative subset of   $R_i$ for each $1 \leq i \leq t$. If $I_i$ is an n-ary $S_i$-prime hyperideal of $R_i$ for some $1 \leq i \leq t$, then   $R_1 \times \cdots \times R_{i-1} \times I_i \times R_{i+1} \times \cdots \times R_t $ is an n-ary $S$-prime hyperideal of $R_1 \times \cdots \times R_t$.
\end{corollary}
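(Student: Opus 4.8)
The plan is to reduce the $t$-fold product to the two-factor situation already handled in the preceding theorem. Fix the index $i$ for which $I_i$ is $n$-ary $S_i$-prime, and collapse all remaining factors into a single hyperring by setting $A=R_1\times\cdots\times R_{i-1}\times R_{i+1}\times\cdots\times R_t$ and $S_A=S_1\times\cdots\times S_{i-1}\times S_{i+1}\times\cdots\times S_t$. The first point to record is that $A$ is again a Krasner $(m,n)$-hyperring and that $S_A$ is an $n$-ary multiplicative subset of it; both follow by iterating the componentwise product construction recalled before the preceding theorem, since the product hyperoperations and the condition $g(s_1^n)\in S$ are all verified coordinatewise. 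I would also note that the disjointness requirement is automatic: since $0\in I_i$ and $I_i\cap S_i=\varnothing$ we have $0\notin S_i$, hence the zero of the product lies outside $S$ and $\bigl(R_1\times\cdots\times I_i\times\cdots\times R_t\bigr)\cap S=\varnothing$.

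Next I would invoke the coordinate-reordering isomorphism $\phi\colon R_1\times\cdots\times R_t\longrightarrow R_i\times A$ that moves the $i$-th coordinate to the front. It carries the hyperideal $R_1\times\cdots\times I_i\times\cdots\times R_t$ onto $I_i\times A$ and carries $S$ onto $S_i\times S_A$. By the forward implication of the preceding theorem, applied with $R_i$ in the role of $R_1$, with $A$ in the role of $R_2$, and with the $S_i$-prime hyperideal $I_i$, the product $I_i\times A$ is $n$-ary $(S_i\times S_A)$-prime in $R_i\times A$.

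Finally I would transport this back along $\phi$. Since $\phi(S)=S_i\times S_A$ and $0\notin S$ by the remark above, Theorem \ref{37} applied to $h=\phi$ and $I_2=I_i\times A$ shows that $\phi^{-1}(I_i\times A)=R_1\times\cdots\times I_i\times\cdots\times R_t$ is $n$-ary $S$-prime in $R_1\times\cdots\times R_t$, as required.

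The only genuinely delicate point is bookkeeping around the position of $i$: the preceding theorem is stated asymmetrically, with the proper hyperideal in the first factor and the whole ring in the second, whereas here $I_i$ may sit in any slot. The single reordering isomorphism $\phi$ dissolves this, and it is harmless precisely because every product structure is defined coordinatewise. If one prefers to avoid appealing to isomorphism-invariance, the same result follows by a short induction on $t$ that peels off the first factor and splits into the cases $i=1$, handled directly by the preceding theorem, and $i>1$, handled by first applying the induction hypothesis to $\prod_{j\ge 2}R_j$ and then the preceding theorem once more.
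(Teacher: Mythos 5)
Your proof is correct and matches the paper's intended argument: the corollary is stated there without proof as an immediate consequence of the preceding two-factor product theorem, and your reduction---collapse the remaining factors into $A$, apply the two-factor theorem to get that $I_i\times A$ is $(S_i\times S_A)$-prime, then transport back along the coordinate-reordering isomorphism via Theorem \ref{37}---is precisely the natural way to make that iteration rigorous (your fallback induction on $t$ works too, though for $i>1$ it would still need a swap isomorphism, since the two-factor theorem places the proper hyperideal in the first slot). One small logical slip worth fixing: the disjointness $\bigl(R_1\times\cdots\times I_i\times\cdots\times R_t\bigr)\cap S=\varnothing$ does not follow from $0\notin S$ as your ``hence'' suggests, but directly from $I_i\cap S_i=\varnothing$, because any element of that intersection would have its $i$-th coordinate in $I_i\cap S_i$; the observation $0\notin S$ is nevertheless exactly what you need---and correctly use---to verify the hypothesis $0\notin\phi(S)$ of Theorem \ref{37}.
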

\section{$n$-ary S-primary hyperideals}
The aim of this section is to define the notion of n-ary $S$-primary hyperideals in a Krasner $(m,n)$-hyperring. The overall
framework of the structure is then explained.
\begin{definition}
Let $S$ be  an n-ary multiplicative subset of a Krasner $(m,n)$-hyperring $R$ and $I$ be a hyperideal of $R$ with $I \cap S=\varnothing$.  $I$ refers to an n-ary $S$-primary hyperideal  if there exists an $s \in S$ such that for all $x_1^n \in R$ with $g(x_1^n) \in I$, we have $g(s,x_i,1^{(n-2)}) \in I$ or $g(x_1^{i-1},s,x_{i+1}^n) \in \sqrt{I}^{(m,n)}$.
\end{definition}
\begin{example}
In Example \ref{ex}, the hyperideal $P=\{0\}$ is a 3-ary $S$-primary hyperideal of $R$.
\end{example}
 The following is a direct consequence and can be proved easily and
so the proof is omited.
\begin{theorem}
Let $S$ be  an n-ary multiplicative subset of a Krasner $(m,n)$-hyperring $R$ such that $S \subseteq U(R)$ and $I$ be a hyperideal of $R$. Then $I$ is an n-ary $S$-primary hyperideal of $R$ if and only if $I$ is an n-ary primary hyperideal of $R$.
\end{theorem}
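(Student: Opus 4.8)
The plan is to reduce everything to one elementary observation: when $s$ is invertible, multiplying by $s$ neither creates nor destroys membership in any hyperideal. Concretely, I would first record the lemma that for any hyperideal $K$ of $R$, any $s \in U(R)$ with inverse $s^{-1}$, and any $x \in R$, one has $g(s,x,1^{(n-2)}) \in K$ if and only if $x \in K$. The reverse implication is just the defining absorption property of a hyperideal. For the forward implication I would apply $s^{-1}$: using the generalized associativity of $g$, commutativity, and the scalar identity $1_R$, I would rewrite $g(s^{-1}, g(s,x,1^{(n-2)}), 1^{(n-2)}) = g(g(s^{-1},s,1^{(n-2)}), x, 1^{(n-2)}) = g(1_R, x, 1^{(n-2)}) = x$, so that $g(s,x,1^{(n-2)}) \in K$ forces $x = g(s^{-1}, g(s,x,1^{(n-2)}), 1^{(n-2)}) \in K$. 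I would apply this lemma twice, once with $K = I$ and once with $K = \sqrt{I}^{(m,n)}$; the latter is legitimate because the radical, being an intersection of prime hyperideals, is itself a hyperideal.

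Next I would dispose of two bookkeeping points. First, the hypothesis $S \subseteq U(R)$ makes $I \cap S = \varnothing$ automatic whenever $I$ is proper: a proper hyperideal contains no unit, since a unit $u \in I$ would give $1_R = g(u,u^{-1},1^{(n-2)}) \in I$ and hence $I = R$. Second, I would rewrite the radical alternative of the $S$-primary condition in factored form, namely $g(x_1^{i-1}, s, x_{i+1}^n) = g(s, g(x_1^{i-1}, 1_R, x_{i+1}^n), 1^{(n-2)})$, which holds by commutativity and associativity together with the scalar-identity property of $1_R$. After this rewriting the unit lemma applies verbatim and shows $g(x_1^{i-1}, s, x_{i+1}^n) \in \sqrt{I}^{(m,n)}$ if and only if $g(x_1^{i-1}, 1_R, x_{i+1}^n) \in \sqrt{I}^{(m,n)}$, and likewise $g(s,x_i,1^{(n-2)}) \in I$ if and only if $x_i \in I$.

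With these translations the two implications become almost formal. For the direction that $I$ primary implies $I$ is $S$-primary, I would pick an arbitrary $s \in S$ (which exists and is a unit) and, given $g(x_1^n) \in I$, use primariness to obtain an index $i$ with $x_i \in I$ or $g(x_1^{i-1},1_R,x_{i+1}^n) \in \sqrt{I}^{(m,n)}$; the easy (absorption) direction of the lemma then yields $g(s,x_i,1^{(n-2)}) \in I$ or $g(x_1^{i-1},s,x_{i+1}^n) \in \sqrt{I}^{(m,n)}$, which is exactly the $S$-primary condition. For the converse, I would take the witness $s \in S$ supplied by $S$-primariness and run the lemma in its hard direction to convert $g(s,x_i,1^{(n-2)}) \in I$ and $g(x_1^{i-1},s,x_{i+1}^n) \in \sqrt{I}^{(m,n)}$ back into $x_i \in I$ and $g(x_1^{i-1},1_R,x_{i+1}^n) \in \sqrt{I}^{(m,n)}$, recovering the primary condition.

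I expect the only genuine obstacle to be the index bookkeeping in the associativity rewrites: verifying $g(s^{-1}, g(s,x,1^{(n-2)}), 1^{(n-2)}) = x$ and the factorization $g(x_1^{i-1}, s, x_{i+1}^n) = g(s, g(x_1^{i-1}, 1_R, x_{i+1}^n), 1^{(n-2)})$ requires threading the $n$-ary associativity axiom through a length-$(2n-1)$ product and using commutativity to place $s$ and $s^{-1}$ correctly. These steps are routine but must be stated carefully; everything downstream is then purely formal, which is presumably why the authors regard the result as a direct consequence.
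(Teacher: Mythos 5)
Your proof is correct: the unit-cancellation lemma ($g(s,x,1^{(n-2)})\in K \iff x\in K$ for $s\in U(R)$ and any hyperideal $K$, applied to both $I$ and $\sqrt{I}^{(m,n)}$), the properness/disjointness bookkeeping, and the factorization $g(x_1^{i-1},s,x_{i+1}^n)=g(s,g(x_1^{i-1},1_R,x_{i+1}^n),1^{(n-2)})$ all check out against the paper's definitions and the $n$-ary associativity/commutativity axioms. The paper omits the proof entirely (calling the result ``a direct consequence''), and your argument is precisely the routine one the authors evidently had in mind, so nothing further is needed.
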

\begin{theorem}
Let $S$ be  an n-ary multiplicative subset of a Krasner $(m,n)$-hyperring $R$ and $I$ be a hyperideal of $R$ with $I \cap S=\varnothing$. Then $I$ is an n-ary $S$-primary hyperideal of $R$ if and only if $(I:s)=\{x \in R \ \vert \ g(x,s,1^{(n-2)}) \in I\}$ is an n-ary primary hyperideal of $R$ for some $s \in S$.
\end{theorem}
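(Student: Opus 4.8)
The final theorem asserts that for an $n$-ary multiplicative subset $S$ of $R$ and a hyperideal $I$ with $I\cap S=\varnothing$, the hyperideal $I$ is $n$-ary $S$-primary if and only if $(I:s)=\{x\in R\mid g(x,s,1^{(n-2)})\in I\}$ is $n$-ary primary for some $s\in S$. This is the $S$-primary analogue of Theorem \ref{31}, so the plan is to mirror that proof, replacing membership in $I$ by membership in $\sqrt{I}^{(m,n)}$ in the second alternative and carefully tracking the radical.

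\begin{proof}
$\Longrightarrow$ Suppose $I$ is $n$-ary $S$-primary, and fix the witness $s\in S$ from the definition. I claim $(I:s)$ is $n$-ary primary. First I would verify $(I:s)$ is a proper hyperideal disjoint from the obstruction: since $I\cap S=\varnothing$ and $s\in S$, one checks $1\notin(I:s)$, so $(I:s)\neq R$. Now suppose $g(y_1^n)\in(I:s)$ for $y_1^n\in R$, i.e. $g(s,g(y_1^n),1^{(n-2)})\in I$. Using associativity and commutativity of $g$ exactly as in the proof of Theorem \ref{31}, rewrite this as $g(g(s,y_1,1^{(n-2)}),y_2^n)\in I$. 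Applying the $S$-primary hypothesis to this product gives either $g(s,g(s,y_1,1^{(n-2)}),1^{(n-2)})=g(s^2,y_1,1^{(n-3)})\in I$, which after absorbing one factor of $s$ (and using $I\cap S=\varnothing$ to rule out the degenerate case, as in Theorem \ref{31}) yields $y_1\in(I:s)$; or the radical alternative, giving $g(s,y_i,1^{(n-2)})\in\sqrt{I}^{(m,n)}$ for some $2\le i\le n$. The key point I must establish is that $g(s,y_i,1^{(n-2)})\in\sqrt{I}^{(m,n)}$ forces $y_i\in\sqrt{(I:s)}^{(m,n)}$: since $s\in S$ and a suitable power of $g(s,y_i,1^{(n-2)})$ lies in $I\subseteq(I:s)$, distributing the powers of $s$ and $y_i$ shows a power of $y_i$, multiplied by a power of $s$ and identities, lies in $(I:s)$, hence $y_i\in\sqrt{(I:s)}^{(m,n)}$. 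This gives the primary condition for $(I:s)$.

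$\Longleftarrow$ Conversely, assume $(I:s)$ is $n$-ary primary for some $s\in S$. Let $g(x_1^n)\in I$ for $x_1^n\in R$. Since $I\subseteq(I:s)$, we have $g(x_1^n)\in(I:s)$, so by the primary hypothesis either $x_i\in(I:s)$ for some $i$, giving $g(s,x_i,1^{(n-2)})\in I$ directly, or $g(x_1^{i-1},1_R,x_{i+1}^n)\in\sqrt{(I:s)}^{(m,n)}$. In the latter case I would translate back through $s$: a power of $g(x_1^{i-1},1_R,x_{i+1}^n)$ multiplied by $s$ lands in $I$, and after reorganizing the product via commutativity and the characterization of the radical recalled before Definition \ref{35} (Ameri–Norouzi's description of $\sqrt{I}^{(m,n)}$), one obtains $g(x_1^{i-1},s,x_{i+1}^n)\in\sqrt{I}^{(m,n)}$. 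Hence $I$ satisfies the $n$-ary $S$-primary condition with the same witness $s$.

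\textbf{Main obstacle.} The routine parts are the associativity/commutativity rewrites of $g$ and the disjointness checks, which copy Theorem \ref{31} verbatim. The genuinely new difficulty is the bookkeeping linking $\sqrt{(I:s)}^{(m,n)}$ to $\sqrt{I}^{(m,n)}$ in both directions, because the radical is defined as an intersection of prime hyperideals and must be handled through the power-characterization $g(a^{(t)},1^{(n-t)})\in I$ (or the $g_{(l)}$ form when $t=l(n-1)+1$). Absorbing a single factor of $s$ into an arbitrary power $t$ — and splitting into the two cases $t\le n$ versus $t=l(n-1)+1$ exactly as in the proof of Theorem \ref{35} — is where the care is required; once that transfer lemma is in place, both implications close immediately.
\end{proof}
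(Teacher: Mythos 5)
Your overall architecture (fix the witness $s$, rewrite $g(g(y_1^n),s,1^{(n-2)})=g(g(s,y_1,1^{(n-2)}),y_2^n)$, prove both directions with the same $s$) coincides with the paper's, and your backward direction is essentially the paper's argument and is sound: there one only needs to \emph{insert} extra factors of $s$, which is legitimate since $I$ is a hyperideal, so $g(s,g(x_1^{i-1},1,x_{i+1}^n)^{(t)},1^{(n-t-1)})\in I$ does yield $g(g(x_1^{i-1},s,x_{i+1}^n)^{(t)},1^{(n-t)})\in I$ and hence $g(x_1^{i-1},s,x_{i+1}^n)\in\sqrt{I}^{(m,n)}$. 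The genuine gap is in the forward direction, at exactly the ``transfer'' step you flag as the main obstacle. Two problems. First, you misstate the radical branch of the $S$-primary dichotomy: applied to the factors $z_1=g(s,y_1,1^{(n-2)}),z_2=y_2,\dots,z_n=y_n$, it gives $g(z_1^{i-1},s,z_{i+1}^n)\in\sqrt{I}^{(m,n)}$ (the complementary product with $s$ in place of $z_i$), not $g(s,y_i,1^{(n-2)})\in\sqrt{I}^{(m,n)}$; this is repairable, but the next step is not. From a power relation such as $g\left(g(z_1^{i-1},s,z_{i+1}^n)^{(t)},1^{(n-t)}\right)\in I$, ``distributing the powers'' and peeling off one $s$ only shows that a \emph{mixed} power --- a product of $t-1$ remaining factors of $s$ with the $t$-th powers of the $y_j$ --- lies in $(I:s)$. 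That is not a pure power of the complementary product, and it does not place anything in $\sqrt{(I:s)}^{(m,n)}$: a prime hyperideal containing $(I:s)$ may contain $s$ rather than the $y_j$, and you cannot rule this out (invoking primaryness of $(I:s)$ here would be circular, since that is what you are proving). In a hyperideal you can always add factors of $s$ but never delete them, so no bookkeeping alone reduces $s^{(t)}$ to the single $s$ that the definition of $(I:s)$ requires.

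The paper closes this hole with an idea your sketch lacks: it applies the $S$-primary hypothesis a \emph{second} time, to the element $g(g(x_1^{i-1},s,x_{i+1}^n)^{(t)},1^{(n-t)})\in I$ decomposed so that $g(s^{(t)},1^{(n-t)})$ occupies a single slot of the $n$-ary product. The branch of the dichotomy that would put $g(s^{(t+1)},1^{(n-t-1)})$ into $I$ or into $\sqrt{I}^{(m,n)}$ is then eliminated by disjointness: since $S$ is $n$-ary multiplicative, the relevant $g$-powers of $s$ lie in $S$, so by the power characterization of the radical this would force $I\cap S\neq\varnothing$, a contradiction. What survives is $g(s,g(x_1^{i-1},1,x_{i+1}^n)^{(t)},1^{(n-t-1)})\in I$ --- with exactly one $s$ --- which says $g(g(x_1^{i-1},1,x_{i+1}^n)^{(t)},1^{(n-t)})\in(I:s)$, i.e.\ $g(x_1^{i-1},1,x_{i+1}^n)\in\sqrt{(I:s)}^{(m,n)}$, as the primary condition demands. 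The same device (not mere ``absorption'') is what settles your first case: from $g(s^{(2)},y_1,1^{(n-3)})\in I$ the paper again applies the hypothesis and discards the branch $g(s^{(3)},1^{(n-3)})\in\sqrt{I}^{(m,n)}$ by the same disjointness argument, leaving $g(s,y_1,1^{(n-2)})\in I$. Until you add this second application of the $S$-primary property, together with the observation that powers of $s$ stay in $S$, your forward direction does not close; and the residual case $t=l(n-1)+1$ must be run through the same argument with $g_{(l)}$ in place of $g$, as the paper indicates.
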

\begin{proof}
$\Longrightarrow $ Let $I$ be an n-ary $S$-primary hyperideal of $R$. Then there exists $s \in S$ such that if $g(a_1^n) \in I$, then $g(s,a_i,1^{(n-2)}) \in I$ or $g(a_1^{i-1},s,a_{i+1}^n) \in  \sqrt{I}^{(m,n)}$. Now we show $(I:s)=\{x \in R \ \vert \ g(s,x,1^{(n-2)}) \in I\}$ is an n-ary primary hyperideal of $R$. Suppose that $g(x_1^n) \in (I:s)$ for some $x_1^n \in R$. This means $g(g(x_1^n),s,1^{(n-2)})=g(x_1^{i-1},g(s,x_i,1^{(n-2)}),x_{i+1}^n) \in I$. Then we have $g(s,g(s,x_i,1^{(n-2)}),1^{(n-2)})=g(s^{(2)}, x_i,1^{(n-3)}) \in I$ or $g(x_1^{i-1},s,x_{i+1}^n) \in \sqrt{I}^{(m,n)}$. In the first case, we have $g(s,x_i,1^{(n-2)}) \in I$ or $g(s^{(3)},1^{(n-3)}) \in \sqrt{I}^{(m,n)}$. From $g(s^{(3)},1^{(n-3)}) \in \sqrt{I}^{(m,n)}$, it follows that there exists $t \in \mathbb{N}$ such that  $g(g(s^{(3)},1^{(n-3)})^{(t)},1^{(n-t)}) \in I$ for $t \leq n$ or $g_{(l)}(g(s^{(3)},1^{(n-3)})^{(t)}) \in I$ for $t=l(n-1)+1$. In both possibilities, we conclude that $I \cap S \neq \varnothing$ which is a contradiction. Hence get $g(s,x_i,1^{(n-2)}) \in I$ which implies $x_i \in (I:s)$. In the second case, there exists $t \in \mathbb{N}$ such that $g(g(x_1^{i-1},s,x_{i+1}^n)^{(t)},1^{(n-t)}) \in I$ for $t \leq n$ or $g_{(l)}(g(x_1^{i-1},s,x_{i+1}^n)^{(t)}) \in I$ for $t=l(n-1)+1$. If $t \leq n$, then we obtain $g(s,g(s^{(t)},1^{(n-t)}),1^{(n-2)})=g(s^{(t+1)},1^{(n-t-1)}) \in \sqrt{I}^{(m,n)}$ or $g(s,g(x_1^{i-1},1,x_{i+1}^n)^{(t)},1^{(n-t-1)}) \in I$. From $g(s^{(t+1)},1^{(n-t-1)}) \in \sqrt{I}^{(m,n)}$, it follows that $I \cap S \neq \varnothing$. Thus we conclude that $g(s,g(x_1^{i-1},1,x_{i+1}^n)^{(t)},1^{(n-t-1)}) \in I$ which means $g(g(x_1^{i-1},1,x_{i+1}^n)^{(t)},1^{(n-t)}) \in (I:s)$. Therefore $g(x_1^{i-1},1,x_{i+1}^n) \in \sqrt{(I:s)}$. Similar for other case. Consequently, $(I:s)$ is an n-ary primary hyperideal of $R$.

$\Longleftarrow$ Let $(I:s)=\{x \in R \ \vert \ g(x,s,1^{(n-2)}) \in I\}$ be an n-ary primary hyperideal of $R$ for some $s \in S$. Suppose that $g(x_1^n) \in I$ for some $x_1^n \in R$. Then we get $g(x_1^n) \in (I:s)$. Since $(I:s)$ is an n-ary primary hyperideals of $R$, then $x_i \in E_S$ which implies $g(s,x_i,1^{(n-2)}) \in I$ or $g(x_1^{i-1},1,x_{i+1}^n) \in \sqrt{(I:s)}$ which means there exists $t \in \mathbb{N}$ such that $g(s,g(x_1^{i-1},1,x_{i+1}^n)^{(t)},1^{(n-t-1)}) \in I$ for $t \leq n$ or $g(s,g_{(l)}(x_1^{i-1},1,x_{i+1}^n)^{(t)}),1^{(n-2)}) \in I$ for $t=l(n-1)+1$. If $t \leq n$, then we have $g(g(x_1^{i+1},s,x_{i+1}^n)^{(t)},1^{(n-t)}) \in I$ which implies $g(x_1^{i-1},s,x_{i+1}^n) \in \sqrt{I}$. Similar for other case. This means $I$ is an n-ary $S$-primary hyperideal of $R$.
\end{proof}
In the following, we consider the relationship between an n-ary $S$-primary hyperideal and its radical.
\begin{theorem}
Let $S$ be  an n-ary multiplicative subset of a Krasner $(m,n)$-hyperring $R$ and $I$ be an n-ary $S$-primary hyperideal of $R$. Then $\sqrt{I}^{(m,n)}$ is an n-ary $S$-prime hyperideal of $R$.
\end{theorem}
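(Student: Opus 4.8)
The plan is to avoid arguing elementwise with products in the radical (which would be circular, since $S$-primeness of $\sqrt{I}^{(m,n)}$ is exactly what is being proved) and instead reduce everything to the colon-hyperideal characterisations already available: Theorem \ref{31} for $n$-ary $S$-prime hyperideals, the preceding theorem characterising $n$-ary $S$-primary hyperideals via $(I:s)$, and the fact (Theorem 4.28 in \cite{sorc1}) that the radical of an $n$-ary primary hyperideal is prime. Throughout I abbreviate $\sqrt{I}^{(m,n)}$ by $\sqrt{I}$. First I would record that $\sqrt{I}\cap S=\varnothing$: if some $s_0\in S$ lay in $\sqrt{I}$, then by the radical membership criterion of Ameri and Norouzi recalled in Section 2 we would have $g(s_0^{(t)},1^{(n-t)})\in I$ for some $t\le n$ (or the corresponding $g_{(l)}$-statement), and multiplying by further copies of $s_0$ while using that $I$ is a hyperideal yields the pure power $g(s_0^{(n)})\in I$; since $g(s_0^{(n)})\in S$ by $n$-ary multiplicativity, this contradicts $I\cap S=\varnothing$.

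Next, because $I$ is $n$-ary $S$-primary, the preceding theorem supplies an $s\in S$ for which $(I:s)$ is an $n$-ary primary hyperideal, and hence $\sqrt{(I:s)}$ is an $n$-ary prime hyperideal by Theorem 4.28 in \cite{sorc1}. In view of Theorem \ref{31}, it is then enough to prove that $(\sqrt{I}:s)$ is $n$-ary prime for this same $s$, and I propose to do so by establishing the equality $(\sqrt{I}:s)=\sqrt{(I:s)}$.

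The inclusion $\sqrt{(I:s)}\subseteq(\sqrt{I}:s)$ is the routine direction: taking $r$ with $g(r^{(k)},1^{(n-k)})\in(I:s)$ gives $g(s,r^{(k)},1^{(\cdots)})\in I$, and multiplying by $k-1$ further copies of $s$ keeps the element in $I$ and produces the $k$-th power of $g(s,r,1^{(n-2)})$, whence $g(s,r,1^{(n-2)})\in\sqrt{I}$, i.e. $r\in(\sqrt{I}:s)$. I expect the reverse inclusion to be the main obstacle. Here one starts from $r\in(\sqrt{I}:s)$, so that some power $(g(s,r,1^{(n-2)}))^{(k)}\in I$ — a product in which $s$ and $r$ each occur $k$ times. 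This product lies in $I\subseteq(I:s)\subseteq\sqrt{(I:s)}$, and since $\sqrt{(I:s)}$ is prime, iterating the elementwise prime property (Lemma 4.5 in \cite{sorc1}) forces $s\in\sqrt{(I:s)}$ or $r\in\sqrt{(I:s)}$. The crucial point is to discard the first alternative: $s\in\sqrt{(I:s)}$ would give $g(s^{(j)},1^{(\cdots)})\in(I:s)$ and therefore a pure power of $s$ in $I\cap S$, which is impossible; hence $r\in\sqrt{(I:s)}$.

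This yields $(\sqrt{I}:s)=\sqrt{(I:s)}$, which is $n$-ary prime, so Theorem \ref{31} delivers that $\sqrt{I}^{(m,n)}$ is an $n$-ary $S$-prime hyperideal of $R$. The only gaps left to fill are bookkeeping rather than substance: the associativity and commutativity rearrangements that turn mixed expressions such as $g(s^{(a)},r^{(b)},1^{(\cdots)})$ into genuine $n$-ary powers, and the parallel treatment of the case $t=l(n-1)+1$ in the radical criterion, which is handled by the same argument with $g_{(l)}$ in place of $g$.
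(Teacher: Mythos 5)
Your proof is correct, but it takes a genuinely different route from the paper. The paper argues directly at the element level: assuming $g(x_1^n)\in\sqrt{I}^{(m,n)}$ with all the products $g(s,x_j,1^{(n-2)})$ ($j\neq i$) outside the radical, it takes a power $g(g(x_1^n)^{(t)},1^{(n-t)})\in I$ and iterates the defining $S$-primary property through a long chain of rearranged products, extracting a contradiction factor by factor. You instead make the argument modular: you invoke the colon characterization of $n$-ary $S$-primary hyperideals to get one $s\in S$ with $(I:s)$ primary, quote Theorem 4.28 of \cite{sorc1} to make $\sqrt{(I:s)}^{(m,n)}$ prime, prove the identity $(\sqrt{I}^{(m,n)}:s)=\sqrt{(I:s)}^{(m,n)}$, and close with Theorem \ref{31}. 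One point deserves emphasis: that identity is \emph{false} for a general hyperideal $I$ and element $s$ (already in $k[x,y]$ with $I=(xy^2)$ and $s=y$ one has $(\sqrt{I}:s)=(x)$ while $\sqrt{(I:s)}=(x)\cap(y)$), so the step where you discard the alternative $s\in\sqrt{(I:s)}^{(m,n)}$ by manufacturing a pure power of $s$ in $I\cap S$ is precisely where the hypotheses $I\cap S=\varnothing$ and the $n$-ary multiplicativity of $S$ do real work — and you did carry that step out, including the padding of the exponent to the form $l(n-1)+1$ so that the pure power lands in $S$. You also use the "prime implies elementwise splitting" direction of Lemma 4.5 of \cite{sorc1} (the paper states only the converse, but uses this direction freely itself, e.g.\ in the proof of Theorem \ref{31}), and your separate verification that $\sqrt{I}^{(m,n)}\cap S=\varnothing$ fills in a step the paper merely asserts. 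What the paper's approach buys is self-containedness at the level of the definition; what yours buys is a shorter, structurally transparent proof in which the same fixed $s$ visibly witnesses $S$-primeness of the radical and the only remaining work is associativity bookkeeping, which you correctly flag as routine.
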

\begin{proof}
Since $I \cap S = \varnothing$ then we conclude that $\sqrt{I}^{(m,n)} \cap S=\varnothing$. Now, let $g(x_1^n) \in \sqrt{I}$ and for all $j \in \{1,...,i-1,i+1,...,n\}$, $g(s,x_j,1^{(n-2)}) \notin \sqrt{I}^{(m,n)}$ for each $s \in S$.  Since $g(x_1^n) \in \sqrt{I}^{(m,n)}$, there exists $t \in \mathbb{N}$ such that $g(g(x_1^n)^{(t)},1^{(n-t)}) \in I$ for $t \leq n$ or $g_{(l)}(g(x_1^n)^{(t)}) \in I$ for $t=l(n-1)+1$. If $g(g(x_1^n)^{(t)},1^{(n-t)}) \in I$, then there exists $s \in S$ such that $g(s,x_i^{(t)},1^{(n-t-1)}) \in I$ or $g(s,g(x_1^{i-1},1,x_{i+1}^n)^{(t)},1^{(n-t-1)})$ as $I$ is a n-ary $S$-primary hyperideal of $R$. Hence we get $g(g(s,x_i,1^{(n-2)})^{(t)},1^{(n-t)}) \in I$ which implies $g(s,x_i,1^{(n-2)}) \in \sqrt{I}^{(m,n)}$ or $g(g(x_1^{i-1},s,x_{i+1}^n)^{(t)},1^{(n-t)}) \in I$. In the second case, we get 

$\hspace{0.7cm}g(g(x_1^{(t)},g(x_2^{i-1},s,1,x_{i+1}^n)^{(t)},1^{(n-2t)}) \in I$

$\Longrightarrow g(g(x_1^{(t)},1^{(n-t)}),g(x_2^{i-1},s,1,x_{i+1}^n)^{(t)},1^{(n-t-1)}) \in I$

$\Longrightarrow g(s,g(x_1^{(t)},1^{(n-t)}),1^{(n-2)}) \in I \ \text{or}\  g(s,g(x_2^{i-1},s,1,x_{i+1}^n)^{(t)},1^{(n-t-1)}) \in \sqrt{I}^{(m,n)}$

$\Longrightarrow g(g(s,x_1,1^{(n-2)})^{(t)},1^{(n-t)}) \in I \ \text{or}\  g(g(x_2^{i-1},s^{(2)},x_{i+1}^n)^{(t)},1^{(n-t)}) \in \sqrt{I}^{(m,n)}$

$\Longrightarrow g(s,x_1,1^{(n-2)}) \in \sqrt{I}^{(m,n)} \ \text{or}\  g(g(x_2^{i-1},s^{(2)},x_{i+1}^n)^{(t)},1^{(n-t)}) \in \sqrt{I}^{(m,n)}$\\
Since $g(s,x_1,1^{(n-2)}) \in \sqrt{I}^{(m,n)}$ is a contradiction, then 

$ \hspace{0.7cm}g(g(x_2^{i-1},s^{(2)},x_{i+1}^n)^{(t)},1^{(n-t)}) \in \sqrt{I}^{(m,n)}$

$\Longrightarrow \exists w \in \mathbb{N}; g(g(g(x_2^{i-1},s^{(2)},x_{i+1}^n),1^{(n-t)})^{(w)}, 1^{(n-w)}) \in I$

$\Longrightarrow g(g(x_2^{(t+w)},1^{(n-t-w)},g(g(x_3^{i-1},s^{(2)},1,x_{i+1}^n)^{(w)},1^{(n-w-1)}) \in I$

$\Longrightarrow g(s,x_2,1^{(n-2)}) \in \sqrt{I}^{(m,n)} \ \text{or} \ g(g(g(x_3^{i-1},s^{(3)},x_{i+1}^n)^{(t)},1^{(n-t)})^{(w)},1^{(n-w)}) \in $

$\hspace{0.7cm}\sqrt{I}^{(m,n)}$

$\vdots$

$\Longrightarrow \cdots \ \text{or} \  g(s,x_n,1^{(n-2)}) \in \sqrt{I}^{(m,n)}$\\
which is contradiction with $g(s,x_j,1^{(n-2)}) \notin \sqrt{I}^{(m,n)}$ for all $j \in \{1,...,i-1,i+1,...,n\}$. Thus we have 
$g(s,x_i,1^{(n-2)}) \in \sqrt{I}^{(m,n)}$.  Consequently, $\sqrt{I}^{(m,n)}$ is an n-ary $S$-prime hyperideal of $R$. If $t=l(n-1)+1$, then by using a similar argument, one can easily complete the proof.
\end{proof}
For a hyperideal $I$ of a Krasner $(m,n)$-hyperring $R$, we refer to the n-ary $S$-prime hyperideal $P=\sqrt{I}^{(m,n)}$ as the associated $S$-prime hyperideal of $I$ and on the other hand $I$ is referred to as an n-ary $P$-$S$-primary hyperideal of $R$.
\begin{theorem}
Let $S$ be  an n-ary multiplicative subset of a Krasner $(m,n)$-hyperring $R$ and $I_1^n$ be n-ary $P$-$S$-primary hyperideals of $R$ for some n-ary $S$-prime hyperideal $P$ of $R$. Then, $I=\cap_{i=1}^nI_i$ is an n-ary $P$-$S$-primary hyperideal of $R$.
\end{theorem}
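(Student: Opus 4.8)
The plan is to verify the three requirements for $I=\cap_{i=1}^n I_i$ to be an $n$-ary $P$-$S$-primary hyperideal: disjointness from $S$, that its associated $S$-prime is again $P$, and the $S$-primary condition itself produced by an explicit common witness. Since $I\subseteq I_k$ and each $I_k\cap S=\varnothing$, we immediately get $I\cap S=\varnothing$. For the radical I would invoke the fact (already used in the proof of Theorem \ref{36}) that the nilradical commutes with finite intersections, so that $\sqrt{I}^{(m,n)}=\cap_{k=1}^n\sqrt{I_k}^{(m,n)}=\cap_{k=1}^n P=P$. Hence $\sqrt{I}^{(m,n)}=P$ and the associated $S$-prime of $I$ is exactly $P$, as the conclusion demands.

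For the $S$-primary condition, let $s_k\in S$ be a witness for $I_k$ and put $s=g(s_1^n)\in S$, using that $S$ is $n$-ary multiplicative. The key elementary observation is a common-multiple property: for any $z\in R$ and any hyperideal $J$, associativity and commutativity of $g$ let me write $g(z,s,1^{(n-2)})$ as a $g$-product one of whose factors is $g(z,s_k,1^{(n-2)})$, so that $g(z,s_k,1^{(n-2)})\in J$ forces $g(z,s,1^{(n-2)})\in J$, and likewise $g(x_1^{i-1},s_k,x_{i+1}^n)\in J$ forces $g(x_1^{i-1},s,x_{i+1}^n)\in J$.

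Now take $g(x_1^n)\in I$, so $g(x_1^n)\in I_k$ for every $k$, and split into cases. If for some $k$ and some index $i$ the $S$-primary property of $I_k$ is realised by its radical alternative, $g(x_1^{i-1},s_k,x_{i+1}^n)\in\sqrt{I_k}^{(m,n)}=P$, then the common-multiple property gives $g(x_1^{i-1},s,x_{i+1}^n)\in P=\sqrt{I}^{(m,n)}$, and the radical alternative holds for $I$. Otherwise, for every $k$ the property is realised by the ideal alternative $g(s_k,x_{i(k)},1^{(n-2)})\in I_k$ at some index $i(k)$.

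The main obstacle is exactly this last case: the indices $i(k)$ furnished by the different $I_k$ need not agree, whereas the definition for $I$ requires a single index $i$ with $g(s,x_i,1^{(n-2)})\in I=\cap_k I_k$. I would resolve this by passing to the characterization that $I_k$ is $S$-primary if and only if $(I_k:s_k)$ is an $n$-ary primary hyperideal, combined with the standing hypothesis that all the $I_k$ share the same associated $S$-prime $P$. Concretely, since $(I_k:s_k)$ is primary and $g(x_1^n)\in I_k\subseteq(I_k:s_k)$, for every index $i$ whose complement $g(x_1^{i-1},1,x_{i+1}^n)$ avoids $\sqrt{(I_k:s_k)}$ one gets $x_i\in(I_k:s_k)$, that is $g(s_k,x_i,1^{(n-2)})\in I_k$; the uniformity of the radical then lets me choose one index $i$ that works simultaneously for all $k$, whence $g(s,x_i,1^{(n-2)})\in I_k$ for every $k$ and thus $g(s,x_i,1^{(n-2)})\in I$, the ideal alternative for $I$. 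I expect the bookkeeping that makes ``uniform radical'' precise, namely relating $\sqrt{(I_k:s_k)}$ to $P$, to be the genuinely delicate point, and it is precisely here that the common associated prime $P$ is indispensable.
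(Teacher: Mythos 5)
Your skeleton agrees with the paper's proof up to the final case: you take the same witness $s=g(s_1^n)$, compute $\sqrt{I}^{(m,n)}=\cap_{k=1}^n\sqrt{I_k}^{(m,n)}=P$ exactly as in Theorem \ref{36}, use the same common-multiple scaling inside hyperideals, and dispose of the radical-alternative case just as the paper does. The genuine gap is in the last case, which you make rest entirely on the ``uniformity of the radical,'' i.e.\ on relating $\sqrt{(I_k:s_k)}^{(m,n)}$ to $P$ --- and this you never prove. Worse, in the strong form your argument needs (namely $\sqrt{(I_k:s_k)}^{(m,n)}=P$, so that the complement-products land back in $P=\sqrt{I}^{(m,n)}$), the claim is false. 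Already for a commutative ring viewed as a Krasner $(2,2)$-hyperring: take $R=\mathbb{Z}$, $S=\{2^k \ \vert \ k\geq 1\}$ and $I_1=I_2=6\mathbb{Z}$. Each $I_k$ is $S$-primary with witness $2$ (if $xy\in 6\mathbb{Z}$ and $3\mid x$ then $2x\in 6\mathbb{Z}$, otherwise $3\mid y$ and $2y\in 6\mathbb{Z}=\sqrt{6\mathbb{Z}}$), the associated $S$-prime is $P=\sqrt{6\mathbb{Z}}=6\mathbb{Z}$ (indeed $(6\mathbb{Z}:2)=3\mathbb{Z}$ is prime, so $P$ is $S$-prime by Theorem \ref{31}), yet $(6\mathbb{Z}:2^k)=3\mathbb{Z}$ for every admissible witness, so $\sqrt{(I_k:s_k)}=3\mathbb{Z}\neq P$. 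What comes for free is only $P\subseteq\sqrt{(I_k:s_k)}^{(m,n)}$; the upper bound one can actually prove is $\sqrt{(I_k:s_k)}^{(m,n)}\subseteq (P:s')$ for a witness $s'$ of the $S$-prime $P$, and with that bound your final step produces $g(a_1^{i-1},s',a_{i+1}^n)\in P$ rather than $g(a_1^{i-1},s,a_{i+1}^n)\in P$, forcing yet another enlargement of the witness (say to $g(s,s',u^{(n-2)})$ with $u\in S$, which lies in $S$ by $n$-ary multiplicativity) and a re-run of all the bookkeeping. None of this is in your text, so as written the proof does not close.

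It is also worth seeing that the obstacle you work so hard against is an artifact of reading the disjunction in the definition of $n$-ary $S$-primary existentially in $i$. The paper treats the free index $i$ universally, exactly as in the binary prototype ($xy\in Q$ implies $sx\in Q$ or $sy\in\sqrt{Q}$), and then no common index ever has to be found: given $g(a_1^n)\in I$ and an index $i$ with $g(s,a_i,1^{(n-2)})\notin I$, choose $j$ with $g(s,a_i,1^{(n-2)})\notin I_j$; since $s$ is a multiple of $s_j$, also $g(s_j,a_i,1^{(n-2)})\notin I_j$, so the $S$-primary property of $I_j$ \emph{at that same index} $i$ yields $g(a_1^{i-1},s_j,a_{i+1}^n)\in\sqrt{I_j}^{(m,n)}=P=\sqrt{I}^{(m,n)}$, and multiplying in the remaining $s_k$'s lifts this to $g(a_1^{i-1},s,a_{i+1}^n)\in\sqrt{I}^{(m,n)}$. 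That is the entirety of the paper's last case; your detour through the primary hyperideals $(I_k:s_k)$ is unnecessary under this reading, and it is precisely where your argument breaks.
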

\begin{proof}
Let $I_j$ be an n-ary $S$-primary hyperideal of $R$ for all $1 \leq j \leq n$. Then there exists $s_j \in S$ such that if $g(x_1^n) \in I_j$ for $x_1^n \in R$, then $g(s_j,x_i,1^{(n-2)}) \in I_j$ or $g(x_1^{i-1},s_j,x_{i+1}^n) \in  \sqrt{I_j}^{(m,n)}$ for some $1 \leq i \leq n$. We put $s=g(s_1^n)$. Let $g(a_1^n) \in I$ for some $a_1^n \in R$ and let us assume that $g(s,a_i,1^{(n-2)}) \notin I$ for all $1 \leq i \leq n$. This means $g(s_j,a_i,1^{(n-2)}) \notin I_j$ for some $1 \leq j \leq n$. Since $I_j$ is an $S$-primary hyperideal of $R$, we get $g(a_1^{i-1},s_j,a_{i+1}^n) \in \sqrt{I_j}^{(m,n)}$. Since $I_1^n$ are n-ary $P$-$S$-primary hyperideals of $R$ for some n-ary $S$-prime hyperideal $P$ of $R$, then $\sqrt{I_j}^{(m,n)}=\sqrt{I}$. Then  we obtain $g(a_1^{i-1},s_j,a_{i+1}^n) \in \sqrt{I}^{(m,n)}$ which implies $g(a_1^{i-1},s,a_{i+1}^n) \in \sqrt{I}^{(m,n)}$ and the proof is over.
\end{proof}
\begin{theorem}
Let $S$ be  an n-ary multiplicative subset of a Krasner $(m,n)$-hyperring $R$ and $I_1^{n-1}$ be hyperideals of $R$ such that for each $1 \leq i \leq n-1$, $I_i \cap S \neq \varnothing$. If $I$ is an n-ary $S$-primary hyperideal of $R$, then $g(I_1^{n-1},I)$ is an n-ary $S$-primary hyperideal of $R$.
\end{theorem}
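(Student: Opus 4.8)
The plan is to check the definition directly for $K:=g(I_1^{n-1},I)$ with a single, explicitly chosen witness. First I would set up the data: for each $1\leq i\leq n-1$ choose $s_i\in I_i\cap S$ (possible since $I_i\cap S\neq\varnothing$), and let $s\in S$ be a witness for $I$ being n-ary $S$-primary. Because $I$ is a hyperideal and the final coordinate in $g(I_1^{n-1},I)$ runs through $I$, we have $K\subseteq I$; hence $K\cap S\subseteq I\cap S=\varnothing$, so $K$ is a proper hyperideal disjoint from $S$. I would then take as the candidate witness $s':=g(s,s_1,\ldots,s_{n-1})$, which belongs to $S$ since $S$ is n-ary multiplicative and $s,s_1,\ldots,s_{n-1}$ are $n$ elements of $S$.

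The crucial auxiliary step will be the radical inclusion: for every $a\in\sqrt{I}^{(m,n)}$ one has $g(s_1^{n-1},a)\in\sqrt{K}^{(m,n)}$. To establish it I would invoke the power description of the radical from \cite{sorc1}: $a\in\sqrt{I}^{(m,n)}$ yields $t\in\mathbb{N}$ with $g(a^{(t)},1^{(n-t)})\in I$ when $t\leq n$ (or $g_{(l)}(a^{(t)})\in I$ when $t=l(n-1)+1$). Setting $b=g(s_1^{n-1},a)$ and expanding $g(b^{(t)},1^{(n-t)})$ by the associativity and commutativity of $g$, the factor $g(a^{(t)},1^{(n-t)})\in I$ can be grouped together with one copy of each $s_j\in I_j$, giving an element of $g(I_1^{n-1},I)=K$; the leftover copies of the $s_j$ are then absorbed because $K$ is a hyperideal. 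Thus $g(b^{(t)},1^{(n-t)})\in K$ and $b\in\sqrt{K}^{(m,n)}$; the case $t=l(n-1)+1$ is identical.

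With these preparations the verification is short. I would start from an arbitrary $g(x_1^n)\in K$; since $K\subseteq I$ this gives $g(x_1^n)\in I$, so by the choice of $s$ there is an index $i$ for which either $g(s,x_i,1^{(n-2)})\in I$ or $g(x_1^{i-1},s,x_{i+1}^n)\in\sqrt{I}^{(m,n)}$. In the first subcase, associativity and commutativity give $g(s',x_i,1^{(n-2)})=g(s_1,\ldots,s_{n-1},g(s,x_i,1^{(n-2)}))$, which lies in $g(I_1^{n-1},I)=K$ because $g(s,x_i,1^{(n-2)})\in I$ and $s_j\in I_j$. In the second subcase, put $a=g(x_1^{i-1},s,x_{i+1}^n)\in\sqrt{I}^{(m,n)}$ and rearrange to get $g(x_1^{i-1},s',x_{i+1}^n)=g(s_1^{n-1},a)$, which lies in $\sqrt{K}^{(m,n)}$ by the auxiliary step. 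In both subcases the defining alternative for $K$ holds at the same index $i$ with the witness $s'$, so $K$ is n-ary $S$-primary.

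The hard part will be the auxiliary radical inclusion rather than the case analysis. One cannot shortcut it by comparing radicals, since $\sqrt{K}^{(m,n)}$ generally differs from $\sqrt{I}^{(m,n)}$ (already classically, $\sqrt{I_1 I}$ equals $\sqrt{I_1}\cap\sqrt{I}$ and need not equal $\sqrt{I}$); instead one must exhibit a concrete power of $g(s_1^{n-1},a)$ inside $K$, which is what forces the bookkeeping with the two radical cases $t\leq n$ and $t=l(n-1)+1$ and the careful matching of the $g_{(l)}$ arities. Once that inclusion is in place, the associativity and commutativity rearrangements used to transport each alternative from $I$ to $K$ are routine.
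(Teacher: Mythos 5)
Your proposal is correct and follows essentially the same route as the paper: the paper also chooses $u_i\in I_i\cap S$ and uses the witness $g(u,s,1^{(n-2)})$ with $u=g(u_1^{n-1},1)$ (your $s'=g(s,s_1,\ldots,s_{n-1})$ up to commutativity), handles the first alternative by regrouping one copy of each $u_i$ with $g(s,x_i,1^{(n-2)})\in I$ to land in $g(I_1^{n-1},I)$, and settles the second via the same power computation $g(u^{(t)},g(g(x_1^{i-1},s,x_{i+1}^n)^{(t)},1^{(n-t)}),1^{(n-t-1)})\in g(I_1^{n-1},I)$, which is exactly your auxiliary radical inclusion. Like you, the paper disposes of the case $t=l(n-1)+1$ by noting the argument is analogous.
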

\begin{proof}
Since $g(I_1^{n-1},I) \subseteq I$ and $I \cap S =\varnothing$, we get $g(I_1^{n-1},I) \cap S=\varnothing$. Let $I$ be an n-ary $S$-primary hyperideal of $R$. Then there exists $s \in S$ such that if $g(a_1^n) \in I$ for $a_1^n \in R$, then $g(s,a_i,1^{(n-2)}) \in I$ or $g(a_1^{i-1},s,a_{i+1}^n) \in  \sqrt{I}^{(m,n)}$. Suppose that $g(x_1^n) \in g(I_1^{n-1},I)$ for some $x_1^n \in R$. As $g(I_1^{n-1},I) \subseteq I$, then $g(x_1^n) \in I$. Since for each $1 \leq i \leq n-1$, $I_i \cap S \neq \varnothing$, then we have $u_i \in I_i \cap S$. Put $u=g(u_1^{n-1},1)$. If $g(s,x_i,1^{(n-2)}) \in I$, then $g(g(u,s,1^{(n-2)}),x_i,1^{(n-2)})=g(u,g(s,x_i,1^{(n-2)}),1^{(n-2)}) \in g(I_1^{(n-1)},I)$. Now, let $g(x_1^{i-1},s,x_{i+1}^n) \in \sqrt{I}^{(m,n)}$. Then there exists $t \in \mathbb{N}$ such that $g(g(x_1^{i-1},s,x_{i+1}^n)^{(t)},1^{(n-t)}) \in I$ for $t \leq n$ or $g_{(l)}(g(x_1^{i-1},s,x_{i+1}^n)^{(t)}) \in I$ for $t=l(n-1)+1$. In the former case, we have

 $\hspace{0.7cm} g(u^{(t)},g(g(x_1^{i-1},s,x_{i+1}^n)^{(t)},1^{(n-t)})),1^{(n-t-1)}) $ 

$\hspace{0.4cm}=g(g(x_1^{i-1},g(u,s,1^{(n-2)}),x_{i+1}^n)^{(t)},1^{(n-t)}) $

 $\hspace{0.4cm}\in g(I_1^{n-1},I)$\\
which implies $g(x_1^{i-1},g(u,s,1^{(n-2)}),x_{i+1}^n) \in \sqrt{g(I_1^{n-1},I)}^{(m,n)}$. Thus, $g(I_1^{n-1},I)$ is an n-ary $S$-primary hyperideal of $R$. In the second case, a similar argument completes the proof. 
\end{proof}
\begin{theorem} 
Let $R$ be a Krasner $(m,n)$-hyperring and $S$ be an $n$-ary multiplicative subset of $R$ with $1 \in S$. If $I$ is an $n$-ary $S$-primary hyperideal of $R$ with $I \cap S=\varnothing$, then $S^{-1}I$ is an $n$-ary primary hyperideal of $S^{-1}R$.
\end{theorem}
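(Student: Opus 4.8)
The plan is to run the argument in parallel with the preceding localization theorem for $n$-ary $S$-prime hyperideals, merely inserting the extra ``radical'' alternative that the $S$-primary hypothesis produces. First I would take an arbitrary $G(\frac{a_1}{s_1},\ldots,\frac{a_n}{s_n}) \in S^{-1}I$, rewrite it as $\frac{g(a_1^n)}{g(s_1^n)}$, and use the defining membership criterion for $S^{-1}I$ (namely $\frac{a}{b}\in S^{-1}I$ iff $g(u,a,1^{(n-2)})\in I$ for some $u\in S$, which is exactly what is used in the $S$-prime proof) to extract a $t\in S$ with $g(t,g(a_1^n),1^{(n-2)})\in I$. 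Absorbing $t$ into the first coordinate by associativity, this reads $g(b,a_2^n)\in I$ with $b=g(t,a_1,1^{(n-2)})$, so the $n$-ary $S$-primary property of $I$ applies to this product with its fixed witness $s\in S$. This yields an index $j$ for which either (a) $g(s,c_j,1^{(n-2)})\in I$, or (b) $g(c_1^{j-1},s,c_{j+1}^n)\in\sqrt{I}^{(m,n)}$, where $c_1=b$ and $c_k=a_k$ for $2\le k\le n$.

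In branch (a) I would push the membership back to the fraction level to land on the first alternative of the primary definition. If $j\ge 2$ then $g(s,a_j,1^{(n-2)})\in I$ gives $\frac{a_j}{s_j}\in S^{-1}I$ directly, with witness $s\in S$. If $j=1$, then $g(s,b,1^{(n-2)})=g(g(s,t,1^{(n-2)}),a_1,1^{(n-2)})\in I$ together with $g(s,t,1^{(n-2)})\in S$ gives $\frac{a_1}{s_1}\in S^{-1}I$. Either way some coordinate fraction lies in $S^{-1}I$.

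In branch (b) I would instead show that the complementary fraction lies in the radical. Collecting the available scalars ($s$, and $t$ as well when $j\ge 2$) into a single $u\in S$ and pulling the scalar identity $1$ into the $j$-th slot, one checks by associativity and commutativity that $g(u,N,1^{(n-2)})=g(c_1^{j-1},s,c_{j+1}^n)\in\sqrt{I}^{(m,n)}$, where $N$ is the numerator of the complementary product $G(\frac{a_1}{s_1},\ldots,\frac{1}{1},\ldots,\frac{a_n}{s_n})$ with $\frac{1}{1}$ occupying the $j$-th place (the $1$ in slot $j$ disappears since it is the scalar identity, so $u N$ recovers exactly the expression in branch (b)). Invoking the identity $\sqrt{S^{-1}I}^{(m,n)}=S^{-1}\big(\sqrt{I}^{(m,n)}\big)$, the membership $g(u,N,1^{(n-2)})\in\sqrt{I}^{(m,n)}$ with $u\in S$ is precisely the condition for $G(\frac{a_1}{s_1},\ldots,\frac{1}{1},\ldots,\frac{a_n}{s_n})\in\sqrt{S^{-1}I}^{(m,n)}$, i.e. the second alternative of the primary definition. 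Combining the two branches, each product in $S^{-1}I$ produces an index $j$ with its coordinate fraction in $S^{-1}I$ or the $j$-th complementary product in $\sqrt{S^{-1}I}^{(m,n)}$, so $S^{-1}I$ is $n$-ary primary.

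The hard part will be bookkeeping rather than conceptual, and it is concentrated in two places. First, I must justify $\sqrt{S^{-1}I}^{(m,n)}=S^{-1}\big(\sqrt{I}^{(m,n)}\big)$ in the $(m,n)$-hyperring of fractions; for this I would lean on the radical characterization recalled in the preliminaries (existence of $t\in\mathbb{N}$ with $g(x^{(t)},1^{(n-t)})\in I$ for $t\le n$, or $g_{(l)}(x^{(t)})\in I$ for $t=l(n-1)+1$) together with the well-definedness of $S^{-1}R$. Second, I must perform the associativity/commutativity rearrangements that move the scalars $s$ and $t$ past the $a_k$ and merge them, checking at each step that the element witnessing membership in $S^{-1}I$ or in $S^{-1}\big(\sqrt{I}^{(m,n)}\big)$ genuinely belongs to $S$ (this is where $1\in S$ and closure of $S$ under $g$ are used). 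The secondary radical case $t=l(n-1)+1$ would be handled by the identical argument with $g_{(l)}$ replacing $g$, exactly as in the earlier theorems.
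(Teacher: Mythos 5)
Your proposal is correct and follows essentially the same route as the paper's proof: extract $t\in S$ from $\frac{g(a_1^n)}{g(s_1^n)}\in S^{-1}I$, absorb it into one coordinate (you use the first, the paper WLOG uses the last), apply the $S$-primary witness $s$, and translate each branch back to fractions, with the radical branch resting on $S^{-1}\sqrt{I}^{(m,n)}=\sqrt{S^{-1}I}^{(m,n)}$. The only difference is cosmetic: the paper simply cites this radical identity (Lemma 4.7 of \cite{mah5}) rather than proving it, and organizes your two branches into four explicit cases.
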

\begin{proof}
Let $\frac{a_1}{s_1},...,\frac{a_n}{s_n} \in S^{-1}R$ such that $G(\frac{a_1}{s_1},...,\frac{a_n}{s_n}) \in S^{-1}I$ .
Then we have $\frac{g(a_1^n)}{g(s_1^n)} \in S^{-1}I$. It implies that there exists $t \in S$ such that $g(t,g(a_1^n),1^{(n-2)}) \in I$ and so $g(a_1^{i-1},g(t,a_i,1^{(n-2)}),a_{i+1}^n) \in I$. Without destroying the generality, we may assume that $g(a_1^{n-1},g(t,a_n,1^{(n-2)})) \in I$. Since $I$ is an $n$-ary $S$-primary hyperideal of $R$, then there exist $s \in S$ such that at least one of the cases hold: $g(s,a_i,1^{(n-2)}) \in I$ for some $1 \leq i \leq n-1$,  $g(s,g(t,a_n,1^{(n-2)}),1^{(n-2)}) \in I$,  $g(a_1^{i-1},s,a_{i+1}^{n-1},g(t,a_n,1^{(n-2)})) \in \sqrt{I}^{(m,n)}$ for some $1 \leq i \leq n-1$ or $g(a_1^{n-1},s) \in \sqrt{I}^{(m,n)}$.\\ 
If $g(s,a_i,1^{(n-2)}) \in I$ for some $1 \leq i \leq n-1$, then $G(\frac{a_i}{s_i},\frac{1}{1}^{(n-1)})=
\frac{g(a_i,1^{(n-1)})}{g(s_i,1^{(n-1)})}=\frac{g(s,a_i,1^{(n-2)})}{g(s,s_i,1^{(n-2)})} \in S^{-1}I$. 
If $g(s,g(t,a_n,1^{(n-2)}),1^{(n-2)}) \in I$ then $G(\frac{a_n}{s_n},\frac{1}{1}^{(n-1)})=
\frac{g(a_n,1^{(n-1)})}{g(s_n,1^{(n-1)})}=\frac{g(s,t, a_n,1^{(n-3)})}{g(s,t,s_n,1^{(n-3)})} \in S^{-1}I$. If $g(a_1^{i-1},s,a_{i+1}^{n-1}, g(t,a_n,1^{(n-2)}) \in \sqrt{I}^{(m,n)}$ for some $1 \leq i \leq n-1$, then we get  $G(\frac{a_1}{s_1},...,\frac{a_{i-1}}{s_{i-1}},\frac{1}{1},\frac{a_{i+1}}{s_{i+1}},...,\frac{a_{n-1}}{s_{n-1}},\frac{a_n}{s_n})= \break \frac{g(a_1^{i-1},g(s,t,1^{(n-2)}),a_{i+1}^{n-1},a_n)}{g(s_1^{i-1},g(s,t,1^{(n-2)}),s_{i+1}^{n-1},a_n)} \in S^{-1}\sqrt{I}^{(m,n)}=\sqrt{S^{-1}I}^{(m,n)}$, by Lemma 4.7 in  \cite{mah5}. If $g(a_1^{n-1},s) \in \sqrt{I}^{(m,n)}$, then $G(\frac{a_1}{s_1},...,\frac{a_{n-1}}{s_{n-1}},\frac{1}{1})=\frac{g(a_1^{n-1},s)}{g(s_1^{n-1},s)} \in S^{-1}\sqrt{I}^{(m,n)}=\sqrt{S^{-1}I}^{(m,n)}$. Thus $S^{-1}I$ is an $n$-ary primary hyperideal of $S^{-1}R$.
\end{proof}
\begin{theorem}
Let $R$ be a Krasner $(m,n)$-hyperring, $S$ be an $n$-ary multiplicative subset of $R$ such that $1 \in S$ and $I$ be a hyperideal of $R$ with $I \cap S=\varnothing$ . If $S^{-1}I$ is an $n$-ary primary hyperideal of $S^{-1}R$ and $S^{-1}I \cap R=(I:s)$ for some $ s \in S$, then $I$ is an $n$-ary $S$-primary hyperideal of $R$.
\end{theorem}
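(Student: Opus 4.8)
The plan is to reuse the architecture of the $S$-prime localization theorem above, pulling primary decompositions back from $S^{-1}R$ along the canonical map $r \mapsto \frac{r}{1}$ while keeping track of the radical. I would fix once and for all the element $s \in S$ furnished by the hypothesis $S^{-1}I \cap R = (I:s)$ and argue that this single $s$ witnesses the $n$-ary $S$-primary property of $I$. So suppose $g(a_1^n) \in I$ with $a_1^n \in R$; then $G(\frac{a_1}{1},\dots,\frac{a_n}{1}) = \frac{g(a_1^n)}{1} \in S^{-1}I$. Since $S^{-1}I$ is $n$-ary primary in $S^{-1}R$, there is an index $i$ such that either $\frac{a_i}{1} \in S^{-1}I$, or, replacing the $i$-th argument by $\frac{1}{1}$, the element $G(\frac{a_1}{1},\dots,\frac{1}{1},\dots,\frac{a_n}{1})$ lies in $\sqrt{S^{-1}I}^{(m,n)}$; here I would invoke Lemma 4.7 of \cite{mah5} to identify $\sqrt{S^{-1}I}^{(m,n)} = S^{-1}\sqrt{I}^{(m,n)}$. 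The first alternative is immediate: $\frac{a_i}{1} \in S^{-1}I$ forces $a_i \in S^{-1}I \cap R = (I:s)$, which by the definition of $(I:s)$ yields $g(s,a_i,1^{(n-2)}) \in I$, exactly the first option in the definition of an $n$-ary $S$-primary hyperideal.

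For the second alternative, set $y := g(a_1^{i-1},1,a_{i+1}^n)$, so that $\frac{y}{1} \in \sqrt{S^{-1}I}^{(m,n)}$. The key auxiliary fact I would prove is the contraction identity $\sqrt{S^{-1}I}^{(m,n)} \cap R = \sqrt{\,S^{-1}I \cap R\,}^{(m,n)} = \sqrt{(I:s)}^{(m,n)}$, which follows directly by comparing, for $x \in R$, the assertion $\frac{x}{1} \in \sqrt{S^{-1}I}^{(m,n)}$ with $g(x^{(t)},1^{(n-t)}) \in S^{-1}I \cap R$ through the description of the radical recalled in the preliminaries. This gives $y \in \sqrt{(I:s)}^{(m,n)}$, hence there is $t \in \mathbb{N}$ with $g(y^{(t)},1^{(n-t)}) \in (I:s)$ for $t \le n$, that is, $g(s,y^{(t)},1^{(n-t-1)}) \in I$.

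It remains to translate this into $g(a_1^{i-1},s,a_{i+1}^n) \in \sqrt{I}^{(m,n)}$. Since $1_R$ is scalar and $g$ is commutative, $g(a_1^{i-1},s,a_{i+1}^n) = g(s,y,1^{(n-2)})$, so its $t$-th power $g\big((g(s,y,1^{(n-2)}))^{(t)},1^{(n-t)}\big)$ equals $g(s^{(t)},y^{(t)},1^{(n-2t)})$ when $2t \le n$, which I can regroup as $g\big(s^{(t-1)},g(s,y^{(t)},1^{(n-t-1)}),1^{(n-t)}\big)$; as the inner term lies in $I$ and $I$ absorbs $n$-ary products, this power lies in $I$, whence $g(a_1^{i-1},s,a_{i+1}^n) \in \sqrt{I}^{(m,n)}$. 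Either way the fixed $s$ satisfies the required dichotomy, so $I$ is $n$-ary $S$-primary. I expect the main obstacle to be the arity bookkeeping in this last regrouping — in particular handling $t = l(n-1)+1$ through the derived operation $g_{(l)}$, and confirming the contraction identity without slips in the exponents — but both reduce to the scalar-identity simplifications once the case $t \le n$ is in hand, so I would record the $t = l(n-1)+1$ case as \emph{a similar argument}.
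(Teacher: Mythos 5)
Your proposal is correct and follows essentially the same route as the paper's proof: map into $S^{-1}R$ via $r\mapsto\frac{r}{1}$, apply the primary dichotomy there, contract the first alternative through $S^{-1}I\cap R=(I:s)$ to get $g(s,a_i,1^{(n-2)})\in I$, and in the second alternative pull a power of $g(a_1^{i-1},1,a_{i+1}^n)$ back into $(I:s)$ and regroup (using the scalar identity, commutativity, and absorption by $I$) to obtain $g(a_1^{i-1},s,a_{i+1}^n)\in\sqrt{I}^{(m,n)}$, deferring $t=l(n-1)+1$ to a similar argument exactly as the paper does. The only cosmetic differences are that you package the contraction as the identity $\sqrt{S^{-1}I}^{(m,n)}\cap R=\sqrt{(I:s)}^{(m,n)}$ where the paper unwinds it elementwise, and your mention of Lemma 4.7 of \cite{mah5} is superfluous since that identification is never actually needed.
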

\begin{proof}
Suppose that  $S^{-1}I$ is an $n$-ary primary hyperideal of $S^{-1}R$ and $S^{-1}I \cap R=(I:s)$ for some $ s \in S$. Assume that $g(a_1^n) \in I$ for some $a_1^n \in R$. Then we obtain $G(\frac{a_1}{1},\cdots,\frac{a_n}{1}) \in S^{-1}I$. Since $S^{-1}I$ is an $n$-ary primary hyperideal of $S^{-1}R$, we get $\frac{a_i}{1} \in S^{-1}I$ or $G(\frac{a_1}{1},\cdots,\frac{a_{i-1}}{1},\frac{1}{1},\frac{a_{i+1}}{1},\cdots,\frac{a_n}{1}) \in \sqrt{S^{-1}I}^{(m,n)}$ for some $1 \leq i \leq n$. In the former case, we have $a_i=\frac{a_i}{1} \in S^{-1}I \cap R$ which implies $a_i \in (I:s)$ by the hypothesis. This means $g(s,a_i,1^{(n-2)})\in I$ and we are done. In the second case, we get $G(G(\frac{a_1}{1},\cdots,\frac{a_{i-1}}{1},\frac{1}{1},\frac{a_{i+1}}{1},\cdots,\frac{a_n}{1})^{(t)},\frac{1}{1}^{(n-t)}) \in S^{-1}I$ for $t \leq n$ or $G_{(l)}(G(\frac{a_1}{1},\cdots,\frac{a_{i-1}}{1},\frac{1}{1},\frac{a_{i+1}}{1},\cdots,\frac{a_n}{1})^{(t)}) \in S^{-1}I$ for $t=l(n-1)+1$. The first possibility follows that $g(g(a_1,\cdots,a_{i-1},1,a_{i+1},\cdots,a_n)^{(t)},1^{(n-t)})=\frac{g(g(a_1,\cdots,a_{i-1},1,a_{i+1},\cdots,a_n)^{(t)},1^{(n-t)})}{g(g(1^{(n)})^{(t)},1^{(n-t)})} \in S^{-1}I \cap R$. Hence we conclude that $\break$ $g(s, g(a_1,\cdots,a_{i-1},1,a_{i+1},\cdots,a_n)^{(t)},1^{(n-t-1)}) \in I$, By the hypothesis. This means $g( g(a_1,\cdots,a_{i-1},s,a_{i+1},\cdots,a_n)^{(t)},1^{(n-t)}) \in I$ which implies $\break$ $g(a_1,\cdots,a_{i-1},s,a_{i+1},\cdots,a_n) \in \sqrt{I}^{(m,n)}$, as needed. In the second possibility, 
one can easily complete the proof by using an argument similar.
\end{proof}



\begin{thebibliography}{99}


\bibitem{sorc1}
R. Ameri, M. Norouzi, Prime and primary hyperideals in Krasner $(m,n)$-hyperrings,  {\it European Journal Of Combinatorics}, (2013) 379-390.


\bibitem{mah2} 
M. Anbarloei, $n$-ary 2-absorbing and 2-absorbing primary hyperideals in Krasner $(m,n)$-hyperrings, {\it Matematicki  Vesnik,} {\bf 71} (3) (2019) 250-262. 


\bibitem{mah3}
M. Anbarloei, Unifing the prime and primary hyperideals under one frame in a Krasner $(m,n)$-hyperring, {\it Comm. Algebra}, {\bf 49} (2021) 3432-3446.

\bibitem{mah4} 
M. Anbarloei, A study on a generalization of the n-ary prime hyperideals
 in Krasner $(m,n)$-hyperrings, {\it Afrika Matematika,} {\bf 33} (2021) 1021-1032. 

\bibitem{mah5} 
M. Anbarloei, Krasner $(m, n)$-hyperring of fractions, submitted.


\bibitem{asadi}
A. Asadi, R. Ameri, Direct limit of Krasner (m,n)-hyperrings,{\it Journal of Sciences}, {\bf 31} (1) (2020) 75-83.

\bibitem{s7}
 G. Crombez, On $(m, n)$- rings, {\it Abh. Math. Semin. Univ.,} Hamburg, {\bf 37} (1972) 180-199.

\bibitem{s8}
 G. Crombez, J. Timm, On $(m, n)$-quotient rings, {\it Abh. Math. Semin. Univ.,} Hamburg, {\bf 37} (1972) 200-203. 

\bibitem{s2}
S. Corsini, Prolegomena of hypergroup theory, {\it Second edition, Aviani editor, Italy, } (1993). 

\bibitem{s3}
S. Corsini, V. Leoreanu, Applications of hyperstructure theory, {\it Advances in Mathematics }, vol. 5, Kluwer Academic Publishers, (2003). 

\bibitem{davvaz1}
B. Davvaz, V. Leoreanu-Fotea, Hyperring Theory and Applications, {\it International Academic Press, Palm Harbor, USA}, (2007).


\bibitem{s9}
B. Davvaz, T. Vougiouklis, n-ary hypergroups, {\it Iran. J. Sci. Technol.,} {\bf 30} (A2) (2006) 165-174. 

\bibitem{bmb2}
Z. Dongsheng, $\delta$-primary ideals of commutative rings. {\it Kyungpook Mathematical Journal,} {\bf 41} (2001) 17-22. 

\bibitem{s6}
W. Dorente, Untersuchungen über einen verallgemeinerten Gruppenbegriff, {\it Math. Z.,} {\bf 29} (1928) 1-19. 

\bibitem{bmb3}
B. Fahid, Z. Dongsheng, 2-Absorbing $\delta$-primary ideals of commutative rings. {\it Kyungpook Mathematical Journal}, {\bf 57}
(2017) 193-198. 

\bibitem{hamed}
A. Hamed, A. Malek, $S$-prime ideals of a commutative ring, {\it Beitr Algebra Geom, } (2019) DOI: 10.1007/s13366-019-00476-5.


\bibitem{rev2}
K. Hila, K. Naka,  B. Davvaz,  On $(k,n)$-absorbing
hyperideals in Krasner $(m,n)$-hyperrings, {\it Quarterly Journal of
Mathematics}, {\bf 69}
(2018) 1035-1046.


\bibitem{s5}
E. Kasner, An extension of the group concept (reported by L.G. Weld), {\it Bull. Amer. Math. Soc.,} {\bf 10} (1904) 290-291.



\bibitem{l1}
V. Leoreanu, Canonical n-ary hypergroups, {\it Ital. J. Pure Appl. Math.,} {\bf 24}(2008).

\bibitem{l2}
V. Leoreanu-Fotea, B. Davvaz, n-hypergroups and binary relations, {\it European J. Combin.,} {\bf 29} (2008) 1027-1218.

\bibitem{l3}
 V. Leoreanu-Fotea, B. Davvaz, Roughness in n-ary hypergroups, {\it Inform. Sci.,} {\bf 178} (2008) 4114-4124. 
 
 
 

\bibitem{ma}
X. Ma, J. Zhan, B. Davvaz, Applications of rough soft sets to Krasner $(m,n)$-hyperrings and corresponding decision making methods, {\it Filomat}, {\bf 32} (2018) 6599-6614.


\bibitem{s1} 
F. Marty, Sur une generalization de la notion de groupe, {\it $8^{th}$ Congress Math. Scandenaves, Stockholm,} (1934) 45-49.

\bibitem{massaoud} 
E. Massaoud, $S$-primary ideals of a commutative ring, {\it Communications in Algebra} (2021) DOI: 10.1080/00927872.2021.1977939.


\bibitem{d1}
S. Mirvakili, B. Davvaz, Relations on Krasner $(m,n)$-hyperrings, {\it European J. Combin.,} {\bf 31}(2010) 790-802.

\bibitem{cons}
S. Mirvakili, B. Davvaz, Constructions of  $(m,n)$-hyperrings, {\it Matematicki Vesnik,} {\bf 67} (1) (2015) 1-16.


\bibitem{nour}
M. Norouzi, R.Ameri, V. Leoreanu-Fotea, Normal hyperideals in Krasner $(m,n)$-hyperrings, {\it An. St. Univ. Ovidius Constanta} {\bf 26} (3) (2018) 197-211.

\bibitem{davvaz2}
S. Omidi, B. Davvaz, Contribution to study special kinds of hyperideals in ordered semihyperrings, {\it J. Taibah Univ. Sci.}, {\bf 11} (2017) 1083-1094.

\bibitem{rev1}
S. Ostadhadi-Dehkordi, B. Davvaz,  A Note on
Isomorphism Theorems of Krasner $(m,n)$- hyperrings, {\it Arabian Journal of
Mathematics,} {\bf 5} (2016) 103-115.


\bibitem{bmb4}
E. Ozel Ay, G. Yesilot,  D. Sonmez, $\delta$-Primary Hyperideals on Commutative Hyperrings, {\it Int. J.  Math. and Mathematical Sciences,}  (2017) Article ID 5428160, 4 pages. 

\bibitem{visweswaran}
S. Visweswaran, Some results on $S$-primary ideals of a commutative ring, {\it  Beitr Algebra Geom} (2021) DOI: 10.1007/s13366-021-00580-5. 



\bibitem{s4}
T. Vougiouklis, Hyperstructures and their representations, {\it Hadronic Press Inc., Florida, }(1994).

\bibitem{s10}
M.M. Zahedi, R. Ameri, On the prime, primary and maximal subhypermodules, {\it Ital. J. Pure Appl. Math.,} {\bf 5} (1999) 61-80.

\bibitem{jian}
  J. Zhan, B. Davvaz, K.P. Shum, Generalized fuzzy hyperideals of hyperrings, {\it Computers and Mathematics with Applications}, {\bf 56} (2008) 1732-1740.
\end{thebibliography}
\end{document}